\newtheorem {theorem}{Theorem}
\newtheorem {corollary}{Corollary}
\newtheorem {definition}{Definition}
\newtheorem {lemma}{Lemma}
\newtheorem {proposition}{Proposition}
\newtheorem {remark}{Remark}
\newenvironment {proof}[1][Proof]{\noindent \textbf {#1.} }{\ \rule {0.5em}{0.5em}}
\newcommand{\R}{\mathbb{R}}
\renewcommand{\alph}{n, [a,b]}
\let\@fnsymbol\@arabic
\begin{document}

\newcommand{\newbl}[1]{{\color{blue}  #1}}
\newcommand{\ap}[1]{{\color{red} [Andres:  #1]}}

\title{A Characterization of the n-th Degree Bounded Stochastic Dominance}

\author{Bar Light\protect\thanks{Business School and Institute of Operations Research and Analytics, National University of Singapore, Singapore. e-mail: \textsf{barlight@nus.edu.sg} } ~ and  Andres Perlroth\protect\thanks{ Google Research, CA, USA. e-mail: \textsf{perlroth@google.com }}}  
\maketitle

\thispagestyle{empty}

  \noindent \textsc{Abstract}:
\begin{quote}

We provide a novel characterization of the $n$-th degree bounded stochastic dominance (BSD) order, linking it to the risk tolerance of decision-makers and providing a decision-theoretic foundation for these stochastic orders. Our results reveal that BSD reflects specific risk preferences through the choice of the interval $[a,b]$, by characterizing it in terms of utility functions with globally bounded Arrow--Pratt risk aversion or that satisfy an $n$-convexity condition. They also highlight limitations of BSD, including its dependence on the chosen support interval and the resulting peculiar risk aversion behavior of decision-makers included in the generator of BSD. To partially address this issue, we use our characterization to separate two roles that are combined in BSD: the largest payoff in the lotteries and the upper endpoint of the interval that determines the Arrow--Pratt lower bound. We then introduce a related lower-partial-moment order that provides a clean trade-off between expected value and downside-risk protection.
Using our characterization, we present comparative statics results for decision-making under uncertainty with globally bounded risk aversion measures and savings decisions under globally bounded prudence measures, and derive inequalities for 
$n$-convex functions.

\end{quote}

\noindent {\small Keywords: }stochastic orders;  lower partial moments; risk aversion.

%\smallskip \noindent \emph{JELclassification}: 

\newpage

\section{Introduction}

Stochastic orders are essential tools in fields such as actuarial sciences and decision-making under uncertainty, where they are used to compare random variables. An important class of stochastic orders that has received significant attention from both practitioners and researchers uses the lower partial moments (LPM) of the random variables under consideration to determine the ordering among them \citep{shaked2007stochastic}.  These stochastic orders enjoy well-known desired theoretical properties (e.g., \cite{bawa1975optimal}, \cite{fishburn1976continua}, \cite{fishburn1980stochastic}) and computational properties (e.g., \cite{dentcheva2003optimization}, \cite{dentcheva2016augmented}, \cite{chen2011tight}, \cite{post2017portfolio}, \cite{chen2018stability}, \cite{peng2024data}).

The LPM at a threshold \(c\) quantifies a random variable's downside risk, focusing only on values below \(c\).
 Formally, the $n$-th LPM of a random variable with a distribution function $F$ at a point $c$ is given by $LPM_{n,c}(F)= \int_{ -\infty}^{\infty} \max \{c-x,0\}^{n}dF(x)$. Three different  stochastic orders based on the LPM were introduced and studied in the literature. The most common between them is the stochastic dominance order. 
For a positive integer $n$, we say that $G$ dominates $F$ in the $(n+1)$-th degree stochastic dominance if $LPM_{n,c}(F) \geq LPM_{n,c}(G)$ for all $c \in \mathbb{R}$ \citep{shaked2007stochastic}.  A second stochastic order that is typically called the LPM risk measure considers the inequality $LPM_{n,c}(F) \geq LPM_{n,c}(G)$ for a particular $c$ \citep{shapiro2021lectures}. Finally, a third stochastic order that is called bounded stochastic dominance considers the inequality $LPM_{n,c}(F) \geq LPM_{n,c}(G)$ for all $c$ in some bounded interval $[a,b]$ \citep{fishburn1976continua}. Intuitively, one way to think about  the choice among these stochastic orders is the decision-maker's risk aversion and their knowledge about downside risk. When risk levels are fully known, the LPM risk measure provides a suitable approach for comparing random variables. In scenarios where only partial knowledge is available, bounded stochastic dominance is a more robust method.  For decision-makers without any prior knowledge, or those who are fully risk-averse and wish to hedge against all levels of downside risk, the stochastic dominance order is most suitable. 
This paper elaborates on these distinctions and provides a novel decision-theoretic characterization of bounded stochastic dominance orders, linking these stochastic orders to specific sets of risk-averse decision-makers.

A key advantage of the $(n+1)$-th degree stochastic dominance orders is their well-known characterization in terms of their generators.\footnote{Recall that a generator for a stochastic order $\succeq$ is a set of utility functions $D$ such that $F \succeq G$ implies that every decision maker in $D$ prefers $F$ to $G$, i.e., $\int u(x) dF(x) \geq \int u(x) dG(x)$ for all $u$ in $D$. The generator for the $(n+1)$-th degree stochastic dominance consists of all risk-averse decision makers (i.e., those with a concave and increasing utility function) for $n=1$. For $n=2$, the generator includes all risk-averse and prudent decision makers, that is, those with a concave and increasing utility function and a convex marginal utility function. For higher orders, it includes decision makers who are increasingly risk-averse (see \cite{shaked2007stochastic}). }
 This characterization provides a strong decision-theoretic motivation for employing the $(n+1)$-th degree stochastic dominance orders in applications.
One limitation of these stochastic orders, however, is their strength; they may require too much risk aversion, and hence, fail to compare many random variables of interest. In contrast, the LPM risk measure compares any two random variables, although choosing the threshold $c$ can be somewhat arbitrary, which is a notable drawback.

The bounded stochastic dominance orders serve as a middle ground between the $(n+1)$-th degree stochastic dominance and the LPM risk measure.  Generally, the generator for  $(n+1)$-th degree bounded stochastic dominance orders  is different from the generator of standard $(n+1)$-th degree stochastic dominance orders. This fact was noted in \cite{fishburn1976continua} and \cite{fishburn1980stochastic}. However, a generator for the bounded stochastic dominance order that has a decision-theoretic motivation  is not yet known.\footnote{We also note that in practice,  due to computational considerations, to determine if \(G\) dominates \(F\) in the \((n+1)\)-th degree stochastic dominance, one has to restrict the dominance relation to some bounded domain \([a, b]\) (see \cite{shapiro2021lectures}).}

Our main result characterizes the bounded stochastic dominance orders by providing  a generator for these orders that has a decision-theoretic interpretation and is closely related to the Arrow-Pratt risk aversion measure and the ``$n$-convexity" of the utility functions that belong to the generator.\footnote{A non-negative function $u$ is $n$-convex if $u^{1/n}$ is a convex function.} We show that the generator for the $n+1$-th degree bounded stochastic dominance consists of the utility functions $u$ satisfying (i) the {\em changing derivative property}: the $k$-derivative satisfies $(-1)^k u^{(k)}\leq 0$ for $k=1,\ldots, n+1$ and (ii) $u$ is $n,[a,b]$ convex, i.e.,  $(u(b)-u(x))^{1/n}$ is a convex function on $[a,b]$. Alternatively, (ii) can be replaced by the following condition (ii') the Arrow-Pratt measure of risk aversion $-u^{(2)}(x)/u^{(1)}(x)$ is bounded below by $(n-1)/(b-x)$ for $x \in (a,b)$. The changing derivative property is a well-desired property from a decision-theoretic  point of view and it  characterizes the standard $(n+1)$-degree stochastic dominance order as we mentioned above. Condition (ii) and (ii') provide restrictions on the risk tolerance of the decision makers. Intuitively, these restrictions imply that decision makers that belong to the generator exhibit some amount of risk aversion.  We show that this generator is essentially the maximal generator \citep{muller1997stochastic} of these stochastic orders.  

The core contribution of our characterization is its ability to establish a clear and explicit link between bounded stochastic dominance (BSD) orders defined by lower partial moments and the risk preferences of decision-makers. It was previously known that the choice of the interval $[a,b]$ in the definition of BSD alters the stochastic order under consideration; our characterization extends this understanding by connecting the interval $[a,b]$ to classical measures of risk aversion, such as the Arrow-Pratt risk aversion measure. Specifically, we demonstrate that BSD is linked to utility functions with globally bounded Arrow-Pratt risk aversion measures from below, thereby shedding new light on the class of utility functions for which BSD serves as an appropriate analytical tool. In other words, our results show that the bounded stochastic dominance criteria can be viewed as representing the preferences of a specific class of decision-makers with well-defined and explicitly parameterized risk attitudes. 

The same characterization also highlights limitations of BSD. One of the key advantages of classic stochastic dominance (SD) orders is their perceived ``robustness.'' They require relatively few and agreeable assumptions about utility functions, such as risk-aversion for second-order SD, prudence for third-order SD, temperance for fourth-order SD, and are therefore often regarded as robust and broadly applicable criteria. However, our results show how bounded stochastic dominance (BSD) orders depend on the choice of the interval $[a,b]$ for orders beyond the second, making BSD tied to specific assumptions about risk preferences that go beyond concavity. A particular concern stemming from our characterization is the implication of infinite risk aversion at the upper bound $b$. Specifically, utilities in the generator of the BSD order exhibit infinite Arrow-Pratt risk aversion as $x$ approaches $b$. From a purely economic standpoint, this assumption is potentially unrealistic.  Without a clear justification for the interval $[a,b]$ grounded in economic or behavioral considerations, BSD loses the decision-theoretic appeal of SD as the set of utility functions it represents may be seen as ``unnatural'' for many standard economic applications. 

To address this issue, we use our characterization to separate two roles that are combined in BSD: the largest payoff in the lotteries and the upper endpoint of the interval that determines the Arrow-Pratt lower bound. Specifically, for lotteries supported on $[a,b]$, we consider utility classes defined by a larger endpoint $c>b$. This yields a related lower-partial-moment order that keeps risk aversion finite on the lottery support and provides a clean trade-off between expected value and downside-risk protection. In the third- and fourth-degree cases, we derive simple conditions showing when stronger downside-risk protection can compensate for lower expected value, and how this trade-off becomes more stringent as the endpoint $c$ moves farther from the support.

As a byproduct of our characterization, we establish an equivalence between \(n\)-convexity and functions whose Arrow–Pratt measures of risk aversion are bounded from below. In Section \ref{sec:n-convex}, we leverage this equivalence to derive novel inequalities for \(n\)-convex functions, including Jensen-type inequalities.
In Sections \ref{Sec:pratt} and \ref{Sec:Prudence}, we use our results to show how decision makers with globally bounded from below Arrow-Pratt risk aversion measure rank non-trivial lotteries and how decision makers with globally bounded from below prudence measure choose how much to save under different uncertain future incomes. Our results hold for global bounds on the Arrow-Pratt risk aversion measure and prudence measure as opposed to the typical local analysis using the Arrow-Pratt risk aversion measure or the prudence measure (e.g., \cite{kimball1990precautionary}).

%A specific application of our results that motivates the use of bounded stochastic dominance is illustrated in decision-making involving simple lotteries. As a very simple example, consider a decision maker that evaluates two lotteries: \(G\) provides a payout of \(a\) dollars with probability \(\theta^n\) and \(b\) dollars with the remaining probability, while \(F\) offers a deterministic return of \(\theta a + (1-\theta)b\) dollars where $a<b$ and $n$ is a positive integer. In scenarios like this,  for general $n$, standard stochastic dominance cannot be applied because it can't compare these lotteries as one lottery has a higher expected value and a higher risk than the other lottery. According to Theorem \ref{Main:Thm}, if a decision maker's utility function, \(u\), has a globally bounded from below Arrow-Pratt risk aversion measure (see Theorem \ref{Main:Thm} for the exact bound that depends on $n$ and $b$), then \(F\) will be preferred over \(G\) under the criterion of $(n+1)$-th bounded stochastic dominance. This preference occurs despite \(G\) having a higher expected value, highlighting the   decision maker’s risk aversion within the specific bounds. Theorem \ref{Main:Thm} generalizes this result to any random variables that are ordered by the $(n+1)$-th degree bounded stochastic dominance, thereby linking the decision maker's risk aversion—quantified by a lower bound on the Arrow-Pratt measure to their choices between lotteries.

There is an extensive literature on stochastic orders, particularly focusing on stochastic dominance orders (for a survey, see \cite{muller2002comparison} and \cite{shaked2007stochastic}) and stochastic orders that are generated by various sets of functions. Higher stochastic  dominance orders (see \cite{whitmore1970third},  \cite{rolski1976order}, \cite{ekern1980increasing}, and \cite{denuit1998s}) restrict the sign of higher derivatives of functions and are based on the concept of $n$-monotonicity \citep{williamson1955multiply, rolski1976order}. \cite{leshno2002preferred} and \cite{tsetlin2015generalized} impose certain decision-theoretic motivated constraints on the derivatives of these functions. \cite{vickson1977stochastic} and \cite{post2014linear} assume that functions belong to the Decreasing Absolute Risk Aversion (DARA) class. \cite{post2016standard} adds further curvature conditions on higher derivatives. Fractional degree stochastic dominance is explored by \cite{muller2016between}, \cite{huang2020fractional}, \cite{mao2022characterizing}, and \cite{azmoodeh2023multi}.
\cite{meyer1977choice} studies utility functions with upper and lower bounds on the Arrow-Pratt absolute risk-aversion measure, thus studying the stochastic order generated by the set $AP_{n,[a,b]}$ described in Section \ref{Section: Main}. \cite{light2021family} investigate the class of $n,[a,b]$-convex functions and the corresponding stochastic order generated by the set of $n,[a,b]$-concave functions $LP_{n,[a,b]}$ defined in Section \ref{Section: Main}. The main difference in our approach, in contrast to much of the existing literature, lies in our approach to defining the stochastic orders we study. While previous literature generally define a stochastic order using a set of utility functions grounded in decision theory and subsequently attempt to identify simple integral conditions sufficient to characterize the stochastic order, our paper adopts the inverse process. We start with a well recognized and studied simple integral condition, the comparison of random variables through lower partial moments and then proceed to characterize the set of utility functions, that has a clear  decision-theoretic interpretation, generated by this stochastic order. 

Stochastic orders have been extensively used to compare random variables and to derive comparative statics results in economics, e.g., in information design \citep{ivanov2021optimal}, the measurement of discrimination and inequality \citep{le2012stochastic, zheng2021stochastic}, Bayesian games \citep{mekonnen2022bayesian}, and the analysis of stationary equilibria \citep{light2020uniqueness}, just to name a few. As discussed above, we contribute to this literature in our applications section by presenting novel comparisons of decision outcomes under different lotteries in classical settings such as expected utility theory and consumption–savings problems.

\section{Characterization of Bounded Stochastic Dominance} \label{Section: Main}
In this section we provide our main result. We first introduce some notations. 

Let $C^{n}([a,b])$ be the set of all $n$ times continuously differentiable functions $u :[a ,b] \rightarrow \mathbb{R}$. For $k\geq 1$, we denote by $u^{(k)}$ the $k$th derivative of a function $u$ and for $k=0$ we define $u^{(0)}:=u$.\footnote{As usual, the derivatives at the extreme points $u^{(k)}(a)$ and $u^{(k)}(b)$ are defined by taking the left-side and right-side limits, respectively, see Definition 5.1 \cite{rudin1964principles}).}  For a non-negative integer $n$ and real numbers $a<b$ we define the following sets of utility functions that have the changing derivative property:
\begin{equation*}{U}_{\alph} : =\{u \in C^{n+1}([a,b]): \: (-1)^{k}u^{(k)}(x) \leq 0 \text { } \forall x \in [a,b]  \text{ } \; \forall k =1 ,\ldots  ,n+1 \}.
\end{equation*}

The sets  of utility functions ${U}_{\alph}$ play an important rule in the theory of stochastic orders (for example, see \cite{hadar1969rules}, \cite{whitmore1970third}, \cite{menezes1980increasing}, and \cite{ekern1980increasing}). Note that the set $U_{1,[a,b]}$ consists of all risk averse decision makers and the set $U_{2,[a,b]}$ consists of all risk averse decision makers that are also prudent \citep{kimball1990precautionary}. Stochastic orders that are generated by the sets ${U}_{\alph}$ have received a significant  attention in the literature and are typically called the $(n+1)$-th degree stochastic dominance orders  \citep{muller2002comparison,shaked2007stochastic}. The utility functions  that belong to the sets $U_{n,[a,b]}$ have desired well-known theoretical properties, and hence, stochastic orders that are generated by these sets are well motivated from a decision theory point of view. In addition, there is a known connection between these stochastic orders to the lower partial moments (LPM) of the random variables under consideration (see Proposition \ref{prop:known} below). This connection is fundamental for practical approaches that are used to determine if a random variable dominates another random variable under these stochastic orders   (see \cite{post2017portfolio} and \cite{fang2022optimal}).  We now define formally the $(n+1)$-th degree stochastic dominance.

\begin{definition}\label{Def: Stochastic dominance}
Let $F$ and $G$ be two distribution functions on $[a,b]$ and let $n$ be a positive integer. We say that $G$ dominates $F$ in $(n+1)$-th degree stochastic dominance order and write $G \succeq_{n+1} F$, if and only if for all $c \in \mathbb{R}$, we have
 \begin{equation} \label{equation:LPM}
     \int_{-\infty}^{\infty} \max\{c-x,0\}^{n} dF(x) \ge \int_{-\infty}^{\infty} \max\{c-x,0\}^{n} dG(x)\;.
      \end{equation} 
\end{definition}

For a proof of the following characterization of the $(n+1)$-th degree stochastic dominance see, for example,  \cite{shaked2007stochastic} Section 4.A.7 (the moment based conditions that are typically imposed to state Proposition \ref{prop:known} are redundant under inequality (\ref{equation:LPM}), e.g., \cite{fishburn1980stochastic}.) 
\begin{proposition}  \label{prop:known}
Let $F$ and $G$ be two distributions over $[a,b]$ and let $n$ be a positive integer. Then $G \succeq_{n+1} F$  if and only if 
\begin{equation*}
\int _{a}^{b} u(x) dG(x) \geq  \int _{a}^{b} u(x) dF(x)
\end{equation*}
for all $u \in U_{n,[a,b]}$. 
\end{proposition}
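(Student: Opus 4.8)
The plan is to pass from lower partial moments to iterated integrals of the distributions and then integrate by parts. For a distribution $F$ on $[a,b]$ put $F^{[1]}=F$ and $F^{[k]}(c)=\int_a^c F^{[k-1]}(t)\,dt$ for $k\ge 2$, define $G^{[k]}$ the same way, and set $H=F-G$. A short computation (one integration by parts together with Fubini's theorem, using that $F$ and $G$ have no mass below $a$) gives $\frac{1}{(n-1)!}\int_{-\infty}^c\max\{c-x,0\}^{n-1}\,dF(x)=F^{[n]}(c)$ for $c\in[a,b]$, and for $c\ge b$ the left-hand side equals the degree-$(n-1)$ Taylor polynomial of $F^{[n]}$ about $b$ evaluated at $c$. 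Consequently $F\succeq_n G$ is equivalent to $H^{[n]}(c)\ge 0$ for all $c\in[a,b]$, together with the endpoint inequalities $H^{[k]}(b)\ge 0$ for $k=2,\dots,n$ forced by the $c\ge b$ portion of Definition~\ref{Def: Stochastic dominance} (note $H^{[1]}(b)=F(b)-G(b)=0$); these are precisely the inequalities tested by the polynomial members $-\max\{b-x,0\}^k$, $1\le k\le n-1$, of $U_{n-1,[a,b]}$.

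First I would prove the ``only if'' direction. Fix $u\in U_{n-1,[a,b]}$ and integrate $\int_a^b u\,d(F-G)=\int_a^b u\,dH$ by parts $n$ times, using $H(b)=0$ and $H^{[k]}(a)=0$ for $k\ge 2$; this yields
\[
\int_a^b u\,dH=\sum_{k=1}^{n-1}(-1)^k u^{(k)}(b)\,H^{[k+1]}(b)+(-1)^n\int_a^b u^{(n)}(x)\,H^{[n]}(x)\,dx .
\]
In the last term $(-1)^n u^{(n)}\le 0$ by the changing-derivative property and $H^{[n]}\ge 0$ on $[a,b]$, so the term is nonpositive; in each boundary term $(-1)^k u^{(k)}(b)\le 0$ and $H^{[k+1]}(b)\ge 0$, so it too is nonpositive. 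Hence $\int_a^b u\,d(F-G)\le 0$, which is the asserted inequality $\int_a^b u\,dG\ge\int_a^b u\,dF$.

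Then I would prove the ``if'' direction by testing the hypothesis against the extremal utilities $u_c(x)=-\max\{c-x,0\}^{n-1}$. For $c\ge b$ this is the polynomial $-(c-x)^{n-1}$ on $[a,b]$; since $c-x\ge 0$ there, differentiation gives $(-1)^k u_c^{(k)}(x)=-(n-1)\cdots(n-k)(c-x)^{n-1-k}\le 0$ for $k=1,\dots,n-1$ and $u_c^{(n)}\equiv 0$, so $u_c\in U_{n-1,[a,b]}$. For $c\in(a,b)$ the function $u_c$ is only $C^{n-2}$, so I would approximate it by mollifications $u_c^{\varepsilon}$: convolution with a nonnegative kernel commutes with differentiation and preserves the sign conditions (after a harmless shrinking of $[a,b]$), so $u_c^{\varepsilon}\in U_{n-1,[a,b]}$ and $u_c^{\varepsilon}\to u_c$ uniformly on $[a,b]$; passing to the limit in $\int_a^b u_c^{\varepsilon}\,dG\ge\int_a^b u_c^{\varepsilon}\,dF$ gives $\int_a^b u_c\,dG\ge\int_a^b u_c\,dF$, i.e.\ $\int_{-\infty}^c\max\{c-x,0\}^{n-1}\,dF(x)\ge\int_{-\infty}^c\max\{c-x,0\}^{n-1}\,dG(x)$. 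For $c\le a$ both sides vanish. Together these say $F\succeq_n G$.

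The two integration-by-parts computations and the derivative bookkeeping for $u_c$ are routine. The main obstacle is making the ``only if'' boundary terms rigorous: carrying out the repeated Stieltjes integration by parts with appropriate care at endpoint atoms of $F$ and $G$, and --- the real crux --- verifying that the $c\ge b$ portion of Definition~\ref{Def: Stochastic dominance} does deliver every endpoint inequality $H^{[k]}(b)\ge 0$, $k=2,\dots,n$, needed to sign the terms $(-1)^k u^{(k)}(b)H^{[k+1]}(b)$; the equivalence turns on this point.
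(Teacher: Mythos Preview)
The paper does not prove Proposition~\ref{prop:known}; it only cites \cite{shaked2007stochastic}.  Your strategy---iterated integration by parts for the ``only if'' half and testing against (mollified) lower-partial-moment functions for the ``if'' half---is exactly the standard route, and it is also the template the paper itself uses later in Lemmas~\ref{Lemma: LPM implies G} and~\ref{Lemma: G implies LPM} for the bounded version.

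You have, however, put your finger on a genuine gap, and it is not one you can close.  For $c\ge b$ the dominance condition in Definition~\ref{Def: Stochastic dominance} says only that the polynomial
\[
Q(y)=\sum_{j=0}^{n-2}\frac{(n-1)!}{j!}\,H^{[n-j]}(b)\,y^{j},\qquad y=c-b\ge 0,
\]
is nonnegative (the $y^{n-1}$ coefficient is $H^{[1]}(b)=0$).  For $n\le 3$ this forces every $H^{[k]}(b)\ge 0$, so your argument is complete there.  For $n\ge 4$ it does not: take $[a,b]=[0,1]$, $F=\tfrac{1}{69}(5\delta_{0}+64\delta_{3/4})$, $G=\tfrac{1}{69}(40\delta_{1/2}+29\delta_{1})$.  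One checks directly that $H^{[4]}(c)\ge 0$ on $[0,1]$ and $Q(y)=\tfrac{1}{69}(1-3y+3y^{2})\ge 0$ for $y\ge 0$, so $F\succeq_{4}G$ in the sense of Definition~\ref{Def: Stochastic dominance}; yet $H^{[3]}(1)=-\tfrac{1}{138}<0$, and testing with $u(x)=-(1-x)^{2}\in U_{3,[0,1]}$ gives $\int u\,dG<\int u\,dF$.  Thus the ``only if'' direction fails for $n\ge 4$ under the paper's definition.  The classical formulation in \cite{shaked2007stochastic} adjoins precisely the missing moment conditions $H^{[k]}(b)\ge 0$ for $k=2,\dots,n$ (equivalently $\int(b-x)^{k-1}dF\ge\int(b-x)^{k-1}dG$), and with those hypotheses your boundary terms are signed and the proof goes through verbatim.
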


As we discussed in the introduction, the $(n+1)$-th degree stochastic dominance can be too strong in the sense that many random variables cannot be compared by these stochastic orders. An alternative approach is to use a``restricted" $(n+1)$-th degree stochastic dominance where inequality (\ref{equation:LPM}) is required to hold only for  $c \in [a,b]$ for some bounded interval $[a,b]$.   These stochastic orders were introduced by \cite{fishburn1976continua} and are called bounded stochastic dominance orders.  We now define formally  bounded stochastic dominance (BSD). 
\begin{definition}
Let $F$ and $G$ be two distribution functions on $[a,b]$ and let $n$ be a positive integer. We say that $G$ dominates $F$ in  the $(n+1)$-th degree bounded  stochastic dominance order and write $G \succeq_{n+1,[a,b]-BSD} F$, if and only if for all $c \in [a,b]$, we have
 \begin{equation} \label{equation:bddLPM}
     \int_{a}^{b} \max\{c-x,0\}^{n} dF(x) \ge \int_a^b \max\{c-x,0\}^{n} dG(x)\;.
      \end{equation} 
\end{definition}

Clearly stochastic dominance implies bounded stochastic dominance. Notice that, given Proposition \ref{prop:known},  a generator of the $(n+1)$-th degree bounded stochastic dominance can be given by a set of decisions makers $U_{n,[a,b]} \cap W_{n,[a,b]}$ for some set  $W_{n,[a,b]}$.  Our main result provides the set $W_{n,[a,b]}$ for each $n$ (see Theorem \ref{Main:Thm}). We show that these sets contain ``very" risk averse decision makers in the sense that their utility functions satisfy curvature conditions that are stronger than concavity. These curvature conditions are related to the Arrow-Pratt risk aversion measure and the $n$-convexity of the utility functions. We now introduce these sets. 

We first discuss the set of $n,[a,b]$-concave functions. We say that a function is $n,[a,b]$-concave if it belongs to the set $LP_{\alph}$ where 
\begin{equation*}{LP}_{\alph} : =\{u :[a,b] \rightarrow \mathbb{R}: \left (u(b)-u(x) \right )^{1/n} \text{ is convex and decreasing on } [a,b]  \},
\end{equation*}
i.e., the function $u(b)-u(x)$ is decreasing and $n$-convex on $[a,b]$. 
For a twice continuously differentiable concave and strictly increasing  function $u$ with the normalization $u(b)=0$,  we have $u \in LP_{\alph}$ if and only if 
\begin{equation*}\frac{\partial \ln (u^{(1)}(x))}{ \partial \ln (-u(x))} =\frac{u(x)u^{(2)}(x)}{\left (u^{(1)}(x)\right )^{2}} \geq \frac{n  -1}{n } \label{Ineq: p-convex}
\end{equation*} 
for all $x \in (a ,b)$, i.e.,  the elasticity of the marginal utility function with respect to the utility function is bounded below by $(n -1)/n $. The elasticity of $u^{(1)}$ with respect to $u$ is a natural measure of the concavity of $u$ and has an economic interpretation  in terms of the trade-off between risk and reward. \cite{light2021family} study the properties of the functions that belong to the set $LP_{\alph}$ and the stochastic orders that are generated by these functions. 

The second set we introduce is a set of decision makers that have a bounded from below Arrow-Pratt measure of risk aversion. Recall that the Arrow-Pratt measure of risk aversion is given by $-u^{(2)}/u^{(1)} = -\partial \ln(u^{(1)})$. 
\begin{equation*}{AP}_{\alph} : =\{u \in C^{2}([a,b]): u^{(1)}(x) \geq 0, \:  u^{(2)}(x) \leq 0, \: (n-1) u^{(1)}(x) + u^{(2)}(x)(b -x) \leq 0  \text{ } \; \forall x \in [a,b]  \}.
\end{equation*}
For a twice continuously differentiable concave and strictly increasing  function $u$  we have $u \in AP_{\alph}$ if and only if \begin{equation*} -\partial \ln(u^{(1)}(x)) = -\frac{u^{(2)}(x)}{u^{(1)}(x)}  \geq \frac{n  -1}{b-x} \label{Ineq: ADc}
\end{equation*} 
for all $x \in (a ,b)$, i.e., the Arrow-Pratt measure of risk aversion is bounded below by $(n-1)/(b-x)$. 
The Arrow-Pratt measure of risk aversion \citep{pratt1964risk} has many well-known desired theoretical properties and is widely used in the decision theory literature to measure the risk tolerance of decision makers. \cite{meyer1977choice} and \cite{meyer1977second} study the stochastic orders that are generated by functions in $AP_{n,[a,b]}$ (see Remark \ref{Remark:1} below).

 One disadvantage of the sets $AP_{\alph}$ and $LP_{\alph}$  is that they are not subsets of  $U_{\alph}$. For example, for $n=2$, the sets consist of risk averse decision makers that might not be averse to downside risk (prudent) which is a desired property in many applications of interest \citep{kimball1990precautionary}.   Another disadvantage  is that the stochastic orders that are generated by these sets are not easy to characterize (see \cite{meyer1977choice} and \cite{light2021family}). On the other hand, the sets  $U_{\alph} \cap LP _{\alph}$ or $U_{\alph} \cap AP _{\alph}$ include risk averse decision makers that exhibit higher orders of risk aversion (e.g., prudence). For example, for $n=2$ the set  $U_{\alph} \cap AP _{\alph}$ consists of all risk averse and prudent decision makers such that their Arrow-Pratt measure of risk aversion at a point $x$ is at least $1/(b-x)$. Our main result shows that the stochastic orders generated by these sets have a simple characterization in terms of lower partial  moments. 
Theorem \ref{Main:Thm} shows that for every positive integer $n$ these two sets are equal, i.e.,   $U_{\alph} \cap LP _{\alph} =  U_{\alph} \cap AP _{\alph}$ and that these sets generate the $(n+1)$-th degree bounded stochastic dominance order. 

The proof of Theorem \ref{Main:Thm} is provided in Section \ref{sec:proof}.

\begin{theorem} \label{Main:Thm}
Let $F$ and $G$ be two distributions over $[a,b]$ for some $a<b<\infty$ and let $n$ be a positive integer. These three conditions are equivalent:

(i) $G \succeq_{n+1,[a,b]-BSD} F$

(ii) We have
\begin{equation} \label{Eq: 1}
\int _{a}^{b} u(x) dG(x) \geq  \int _{a}^{b} u(x) dF(x)
\end{equation}
for all $u \in U_{\alph} \cap LP _{\alph}$. 

(iii) Inequality  (\ref{Eq: 1}) holds for all $u \in U_{\alph} \cap AP _{\alph}$.

 In particular, we have $U_{\alph} \cap AP _{\alph} = U_{\alph} \cap LP _{\alph}$.
\end{theorem}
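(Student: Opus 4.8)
The plan is to pin down a single, explicitly parametrised family
\[
\mathcal{W}_{\alph} := \Big\{\, u \in C^{n+1}([a,b]) \ :\ u(b)-u(x) = \alpha (b-x)^{n} + \int_{a}^{b}\max\{t-x,0\}^{n}\,\rho(t)\,dt,\ \ \alpha \ge 0,\ \rho \in C([a,b]),\ \rho \ge 0 \,\Big\},
\]
and to prove (I) $\mathcal{W}_{\alph} = U_{\alph}\cap LP_{\alph} = U_{\alph}\cap AP_{\alph}$, and (II) $F\succeq_{n,[a,b]-BSD}G$ if and only if $\int_a^b u\,dG \ge \int_a^b u\,dF$ for every $u \in \mathcal{W}_{\alph}$. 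Part (I) yields the final assertion of the theorem, and (I) together with (II) gives the two ``if and only if'' statements.

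Part (II) is the quick part. Since $F$ and $G$ are probability distributions on $[a,b]$, one has $\int u\,dG - \int u\,dF = \int_a^b (u(b)-u(x))\,d(F-G)(x)$. For $u \in \mathcal{W}_{\alph}$ I would insert the representation of $u(b)-u$ and use Fubini's theorem on the non-negative kernel $\max\{t-x,0\}^{n}\rho(t)$: the right-hand side equals $\alpha \int \max\{b-x,0\}^{n}d(F-G)(x) + \int_a^b \rho(t)\big(\int \max\{t-x,0\}^{n}d(F-G)(x)\big)dt$, and each inner integral is $\ge 0$ by the defining inequality (\ref{equation:bddLPM}) of $\succeq_{n,[a,b]-BSD}$ (with $c=t$, resp.\ $c=b$); hence the whole expression is $\ge 0$. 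Conversely, for fixed $c\in[a,b]$ the function $u$ with $u(b)-u(x) = \max\{c-x,0\}^{n}$ corresponds to $\alpha=0$ and $\rho=\delta_{c}$; mollifying $\delta_{c}$ produces $u_{\varepsilon}\in\mathcal{W}_{\alph}$ with $u_{\varepsilon}\to u$ uniformly, so applying the hypothesis to $u_{\varepsilon}$ and letting $\varepsilon\to0$ gives $\int \max\{c-x,0\}^{n}d(F-G)(x)\ge 0$, which is (\ref{equation:bddLPM}).

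For Part (I), write $v := u(b)-u \ge 0$. Taylor's formula with integral remainder at $b$ gives
\[
v(x) = \sum_{j=1}^{n}\frac{(-1)^{j}v^{(j)}(b)}{j!}(b-x)^{j} + \frac{1}{n!}\int_{a}^{b}\max\{t-x,0\}^{n}\big[(-1)^{n+1}v^{(n+1)}(t)\big]\,dt,
\]
and the conditions $(-1)^{k}u^{(k)}\le 0$ defining $U_{\alph}$ say precisely that $(-1)^{j}v^{(j)}(b)\ge 0$ for $j=1,\dots,n$ and $(-1)^{n+1}v^{(n+1)}\ge 0$. Thus for $u\in U_{\alph}$ one has $u\in\mathcal{W}_{\alph}$ exactly when $v^{(j)}(b)=0$ for $j=1,\dots,n-1$, i.e.\ when $v(x)=O\big((b-x)^{n}\big)$ as $x\uparrow b$. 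If in addition $u\in AP_{\alph}$, then $v'\le 0$, $v''\ge 0$ and $(n-1)v'(x)+(b-x)v''(x)\ge 0$; the last inequality says $x\mapsto v'(x)(b-x)^{-(n-1)}$ is non-decreasing on $[a,b)$, hence bounded there, hence $v'(x)=O\big((b-x)^{n-1}\big)$ and $v(x)=O\big((b-x)^{n}\big)$. If instead $u\in LP_{\alph}$, then $v^{1/n}$ is convex, non-negative, and vanishes at $b$, so $v^{1/n}(x)\le C(b-x)$ near $b$ (bound by the chord to $(b,0)$), again yielding $v(x)=O\big((b-x)^{n}\big)$. Hence $U_{\alph}\cap AP_{\alph}\subseteq\mathcal{W}_{\alph}$ and $U_{\alph}\cap LP_{\alph}\subseteq\mathcal{W}_{\alph}$. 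For the reverse inclusions, take $v$ of the form in $\mathcal{W}_{\alph}$: differentiating term by term shows $(-1)^{k}v^{(k)}\ge 0$ for $k=1,\dots,n+1$, so $u\in U_{\alph}$; writing $v'(x)(b-x)^{-(n-1)} = -n\alpha - n\int \big(\max\{t-x,0\}/(b-x)\big)^{n-1}\rho(t)\,dt$ and noting that $x\mapsto \max\{t-x,0\}/(b-x)$ is non-increasing on $[a,b)$ shows $u\in AP_{\alph}$; and since $v(x) = \int \phi_{s}(x)^{n}\,d\lambda(s)$ for the non-negative, convex-in-$x$ functions $\phi_{s}\in\{\,b-x,\ \max\{s-x,0\}\,\}$ and a suitable finite measure $\lambda$, Minkowski's inequality combined with monotonicity of the $L^{n}(\lambda)$-norm on non-negative functions makes $v^{1/n}(x)=\|\phi_{\cdot}(x)\|_{L^{n}(\lambda)}$ convex and decreasing with $v^{1/n}(b)=0$, so $u\in LP_{\alph}$. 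This gives $\mathcal{W}_{\alph}\subseteq U_{\alph}\cap LP_{\alph}\cap AP_{\alph}$, completing (I).

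The crux — and the step I expect to be the real obstacle — is the equivalence of the two curvature conditions inside (I): recognising that $n$-convexity of $u(b)-u$ and the pointwise Arrow--Pratt bound $-u^{(2)}/u^{(1)}\ge (n-1)/(b-x)$, once combined with the changing-derivative property, are two faces of the single representation above, and in particular that the $LP_{\alph}$ condition forces the derivatives of $u(b)-u$ through order $n-1$ to vanish at $b$. Establishing convexity of $(u(b)-u)^{1/n}$ for members of $\mathcal{W}_{\alph}$ via the $L^{n}$-norm representation, and keeping the regularity bookkeeping consistent (genuine $C^{n+1}$ members of $\mathcal{W}_{\alph}$ versus the Dirac-type test functions, reconciled by the mollification in (II)), is where the care is needed; the integration-by-parts identities and the sign checks on the $v^{(k)}$ are routine.
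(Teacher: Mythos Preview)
Your proof is correct and takes a genuinely different route from the paper's. Both arguments pass through the same intermediate set: the paper calls it $\mathcal{G}_{n,[a,b]}=\{u\in U_{n,[a,b]}:u^{(k)}(b)=0,\ k=1,\dots,n-1\}$, and your $\mathcal{W}_{n,[a,b]}$ is exactly this set rewritten via Taylor's integral remainder at $b$ (with $\alpha=(-1)^{n}v^{(n)}(b)/n!$ and $\rho=(-1)^{n+1}v^{(n+1)}/n!$). The BSD equivalence is then handled similarly---repeated integration by parts in the paper, Fubini on your kernel representation; mollification of the test functions $-\max\{c-x,0\}^{n}$ in both. The real divergence is in proving $\mathcal{G}_{n,[a,b]}=U_{n,[a,b]}\cap LP_{n,[a,b]}$. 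The paper runs a nine-step induction on $n$: it studies the ratio $R(x)=(u(x)-u(b))u^{(2)}(x)/u^{(1)}(x)^{2}$, uses L'H\^opital to control $\liminf_{x\to b^{-}}R(x)$, feeds in the inductive hypothesis applied to $-u^{(1)}\in\mathcal{G}_{n-1,[a,b]}$, and argues by contradiction that $R\ge(n-1)/n$ throughout. Your argument bypasses the induction entirely: for $\mathcal{W}\subseteq LP$ you write $v(x)=\|\phi_{\cdot}(x)\|_{L^{n}(\lambda)}$ with each $\phi_{s}\in\{b-x,\ \max\{s-x,0\}\}$ non-negative and convex, and Minkowski plus monotonicity of the $L^{n}$ norm yields convexity of $v^{1/n}$ in one line; for $U\cap LP\subseteq\mathcal{W}$ the chord bound $v^{1/n}(x)\le v^{1/n}(a)\,(b-x)/(b-a)$ forces $v=O((b-x)^{n})$, hence $v^{(j)}(b)=0$ for $j\le n-1$. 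The paper's induction extracts finer pointwise information about $R$ and its higher-order analogues $R_{u^{(k)}}$, which may be of independent interest; your $L^{n}$-norm representation is more conceptual and makes the $n$-convexity of $u(b)-u$ transparent, at the cost of committing to the explicit Taylor parametrisation of the generator from the outset.
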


Under suitable integrability conditions, Theorem \ref{Main:Thm} also holds for the case that $a= -\infty$. On the other hand, when $b=\infty$ the bounded stochastic order reduces to the standard unbounded stochastic dominance order (see Proposition   \ref{prop:known} above).

Theorem \ref{Main:Thm} shows that $U_{\alph} \cap AP _{\alph} = U_{\alph} \cap LP _{\alph}$.  One may wonder whether $AP_{\alph} = LP_{\alph}$. The next proposition shows that this is not the case. The proof is deferred to the Appendix.

 \begin{proposition} \label{Prop:notequal}
Let $a<b$ and $n \geq 2$. We have     $AP_{n,[a,b]} \nsubseteq LP_{n,[a,b]}$.
\end{proposition}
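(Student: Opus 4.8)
The plan is to exhibit one $u\in AP_{n,[a,b]}$ for which $(u(b)-u(x))^{1/n}$ is not convex on $[a,b]$, so that $u\notin LP_{n,[a,b]}$. The heuristic is that membership in $AP_{n,[a,b]}$ is a pointwise constraint on $-u^{(2)}/u^{(1)}$, whereas $LP_{n,[a,b]}$ couples $u^{(2)}(x)$ to the remaining increment $u(b)-u(x)$ through the inequality $(u(b)-u(x))(-u^{(2)}(x))\ge\frac{n-1}{n}(u^{(1)}(x))^{2}$; so a utility that is only just barely in $AP_{n,[a,b]}$ at some point $x_0$, but whose increment $u(b)-u(x_0)$ has been made small relative to $u^{(1)}(x_0)$, should escape $LP_{n,[a,b]}$.

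I would build $u$ from its derivative, taking $u^{(1)}(x)=(b-x)^{n-1}g(x)$ for a smooth, strictly positive, non-increasing function $g$ on $[a,b]$ (the additive constant in $u$ is irrelevant, as both sets are translation invariant). The algebraic core is the identity $(n-1)u^{(1)}(x)+u^{(2)}(x)(b-x)=(b-x)^{n}g'(x)$ together with $u^{(2)}(x)=(b-x)^{n-2}\bigl((b-x)g'(x)-(n-1)g(x)\bigr)$: since $g>0$ and $g'\le0$, these at once yield $u^{(1)}\ge0$, $u^{(2)}\le0$, and $(n-1)u^{(1)}+u^{(2)}(b-\cdot)\le0$, hence $u\in AP_{n,[a,b]}$ for every such $g$. (When $g$ is constant, $u$ is an affine image of $-(b-x)^{n}$, which lies on the common boundary of $AP_{n,[a,b]}$ and $LP_{n,[a,b]}$, since then $(u(b)-u(x))^{1/n}$ is linear; the room I exploit is precisely to let $g$ be non-constant.)

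Now give $g$ a sharp drop: fix $x_0\in(a,b)$, a small $\varepsilon\in(0,1)$ and a small $\delta>0$; set $g\equiv c$ on $[a,x_0]$, let $g$ decrease smoothly and monotonically from $c$ to $\varepsilon c$ on $[x_0,x_0+\delta]$ with $g'(x_0)=0$, and set $g\equiv\varepsilon c$ on $[x_0+\delta,b]$ (a mollified step function does this). Writing $\phi(x)=u(b)-u(x)$, which is smooth and positive on $[a,b)$, the function $\phi^{1/n}$ is $C^{2}$ near $x_0$ and the sign of $(\phi^{1/n})^{(2)}(x_0)$ agrees with that of $\phi(x_0)\phi^{(2)}(x_0)-\frac{n-1}{n}(\phi^{(1)}(x_0))^{2}$. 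With $\phi(x_0)=\int_{x_0}^{b}(b-t)^{n-1}g(t)\,dt$, $\phi^{(1)}(x_0)=-(b-x_0)^{n-1}c$ and $\phi^{(2)}(x_0)=(n-1)c(b-x_0)^{n-2}$ (this last because $g'(x_0)=0$), that expression equals $(n-1)c(b-x_0)^{n-2}\bigl(\int_{x_0}^{b}(b-t)^{n-1}g(t)\,dt-\frac{c}{n}(b-x_0)^{n}\bigr)$. Splitting the integral over $[x_0,x_0+\delta]$ and $[x_0+\delta,b]$ and using $g\le c$ on the first piece and $g\le\varepsilon c$ on the second gives $\int_{x_0}^{b}(b-t)^{n-1}g(t)\,dt\le c(b-x_0)^{n-1}\delta+\frac{\varepsilon c}{n}(b-x_0)^{n}$, which is strictly less than $\frac{c}{n}(b-x_0)^{n}$ as soon as $\delta<\frac{(1-\varepsilon)(b-x_0)}{n}$. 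For such $\delta$ we get $(\phi^{1/n})^{(2)}(x_0)<0$, so $(u(b)-u(x))^{1/n}$ is not convex on $[a,b]$ and $u\notin LP_{n,[a,b]}$, while $u\in AP_{n,[a,b]}$; this yields the claimed strict non-inclusion, uniformly in $a<b$ and in the integer $n\ge2$.

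The main obstacle, and the one place where $n\ge2$ genuinely enters, is the final estimate: the naive bound $\int_{x_0}^{b}(b-t)^{n-1}g\le u^{(1)}(x_0)\cdot\tfrac{b-x_0}{n}$ coming from monotonicity of $g$ is exactly the bound that the $AP_{n,[a,b]}$ constraint saturates, so one must really use that $g$ can drop sharply strictly inside $(x_0,b)$ while remaining flat, with vanishing derivative, at $x_0$ itself; making this quantitative is the heart of the argument. The rest --- the differentiation identities and the mollified step construction --- is routine. For $n=1$ no such example exists, since $AP_{1,[a,b]}$ and $LP_{1,[a,b]}$ both reduce to the increasing concave functions, which is exactly why the hypothesis $n\ge2$ appears.
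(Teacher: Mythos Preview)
Your argument is correct. Both you and the paper produce an explicit counterexample, but the constructions differ. The paper writes down a single polynomial, $g(x)=-(b-x)^{n+1}\bigl(1-\gamma(b-x)\bigr)$ with $\gamma=\tfrac{n+1}{2b(n+2)}$ (taking $a=0$), checks directly that $g^{(1)}\ge0$, $g^{(2)}\le0$, and $(n-1)g^{(1)}+g^{(2)}(b-\cdot)\le0$, and then computes the ratio $R(0)=\tfrac{(g(0)-g(b))g^{(2)}(0)}{g^{(1)}(0)^{2}}=\tfrac{(n+3)(n-1)}{(n+2)(n+1)}<\tfrac{n-1}{n}$, which is exactly the negation of the differential characterization of $LP_{n,[a,b]}$. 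Your route is more structural: the ansatz $u^{(1)}(x)=(b-x)^{n-1}g(x)$ with $g>0$, $g'\le0$ makes $AP$-membership automatic via the identity $(n-1)u^{(1)}+u^{(2)}(b-\cdot)=(b-x)^{n}g'$, and then the freedom to let $g$ drop sharply just to the right of $x_{0}$ (while keeping $g'(x_{0})=0$) is what decouples the remaining increment $u(b)-u(x_{0})$ from the local data $u^{(1)}(x_{0}),u^{(2)}(x_{0})$. The paper's polynomial is in fact a special instance of your scheme with a quadratic $g$ and $x_{0}=a$, though this is not made explicit there. The trade-off is that the paper's example is fully elementary and requires no smoothing, while your construction makes transparent \emph{why} the $AP$ constraint cannot control the $LP$ constraint: $AP$ is local in $u^{(1)},u^{(2)}$, whereas $LP$ couples $u^{(2)}(x)$ to the integral $\int_{x}^{b}u^{(1)}$.
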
 

We end this section with a few remarks regarding our characterization.

\begin{remark}
 Identifying functions in $U_{\alph} \cap AP _{\alph} = U_{\alph} \cap LP _{\alph}$: At first glance, identifying functions \(u \in U_{\alph}\) that also belong to \(AP_{\alph}\) or \(LP_{\alph}\) may seem non-trivial. However, Lemma \ref{Lemma:Taylor} provides  a straightforward method to identify such functions by considering their behavior at the endpoint \(b\) of the interval \([a, b]\) and their Taylor expansions. 
In the proof of Theorem \ref{Main:Thm} we show that  $U_{\alph} \cap LP _{\alph}= U_{\alph} \cap AP _{\alph} = \mathcal{G}_{n,[a,b]}$ where $\mathcal{G}_{n,[a,b]}$ is defined in Equation (\ref{Eq:G_set}) above. Therefore, a function \(u \in U_{\alph}\) can be checked for membership in \(U_{\alph} \cap AP_{\alph}\) simply by verifying that its derivatives at \(b\) satisfy the conditions for \(\mathcal{G}_{n,[a,b]}\). For example, consider the function $v_{\gamma}$ defined in Equation (\ref{Eq: Almost CRRA}) in Section \ref{Sec:pratt} that is closely related to the class of constant relative risk aversion functions. Then it is immediate to verify that it is indeed in $\mathcal{G}_{2,[a,b]}$ so it belongs to $U_{2,[a,b]} \cap AP _{2,[a,b]}$ as claimed in Section \ref{Sec:pratt}.
We now state Lemma \ref{Lemma:Taylor} that outlines a systematic approach to generate functions in \(U_{\alph} \cap AP_{\alph} = U_{\alph} \cap LP_{\alph}\) from any function in \(U_{\alph}\):  
\begin{lemma} \label{Lemma:Taylor}
    Suppose that $u \in U_{n+1,[a,b]}$ for $n \geq 1$. Consider the function $$g(x) = u(x) - u(b) - \sum _{j=1}^{n} \frac {u^{(j)}(b) (x-b)^{j} } { j!}.$$ Then $g \in U_{n+1,[a,b]} \cap AP _{n+1,[a,b]} = U_{n+1,[a,b]} \cap LP _{n+1,[a,b]}$.
\end{lemma}

\end{remark}

\begin{remark} \label{Remark:1}
    
\cite{meyer1977choice} and \cite{meyer1977second} study stochastic orders generated by sets of decision makers with bounded Arrow-Pratt measures of risk aversion. Notably, for the function \(k(x) = -(b-x)^n\) where $n$ is a positive integer, it holds that \(-k^{(2)}(x)/k^{(1)}(x) = (n-1)/(b-x)\). Thus, for any strictly increasing function \(u\), it belongs to \(AP_{n,[a,b]}\) if and only if \(u(x) = \rho(k(x))\) for some strictly increasing and concave function \(\rho\); this is based on the standard characterization of risk aversion measures from \cite{pratt1964risk}. Furthermore, invoking Theorem 3.7 from \cite{schilling2009bernstein}, we have that \(u(x) = \rho(k(x))\) belongs to \(U_{n,[a,b]}\) if \(\rho \) itself belongs to \(U_{n,[a,b]}\). This provides an interesting characterization for the strictly increasing functions $u$ that belong to \(U_{n,[a,b]} \cap AP_{n,[a,b]}\) as those such that \(u(x) = \rho(k(x))\) for some strictly increasing function \(\rho\) in \(U_{n,[a,b]}\).

\end{remark}

\subsection{The Maximal Generator of Bounded Stochastic Dominance} \label{sec:maximalgen}

Theorem \ref{Main:Thm} provides a generator for the bounded stochastic dominance orders. In this section we discuss the maximal generator of these stochastic orders. Formally, define $F \succeq _{\mathfrak{F}}G$ if 
\begin{equation}\label{Ineq:ineqGandF} \int _{a}^{b}u(x)dF(x) \geq \int _{a}^{b}u(x)dG(x)
\end{equation}
for all $u \in \mathfrak{F}$ where $\mathfrak{F}$ is a subset of the set of all bounded utility functions on $[a,b]$. 
The maximal generator $R_{\mathfrak{F}}$ of the stochastic order $ \succeq _{\mathfrak{F}}$ is the set of all functions $u$ with the property that $F \succeq _{\mathfrak{F}}G$ implies inequality (\ref{Ineq:ineqGandF}).

 When using a stochastic order in decision theory contexts, it is useful to characterize the maximal generator of the stochastic order. If the maximal generator is not known, it is not clear which  decision makers are under consideration when deciding if a random variable dominates another random variable. 
\cite{muller1997stochastic} characterizes the properties of maximal generators. We use the results in \cite{muller1997stochastic} to identify the maximal generator of the bounded stochastic dominance orders. The following result follows from Corollary 3.8 in \cite{muller1997stochastic}.  

\begin{proposition}\label{Prop: maximal generator}
   Suppose that $\mathfrak{F}$ is a convex cone containing the constant functions and closed under pointwise convergence. Then  $R_{\mathfrak{F}}=\mathfrak{F}$.
\end{proposition}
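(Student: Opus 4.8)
The plan is to establish the two inclusions $\mathfrak{F}\subseteq R_{\mathfrak{F}}$ and $R_{\mathfrak{F}}\subseteq\mathfrak{F}$ separately; the first is a one-line consequence of the definitions and the second is where the structural hypotheses on $\mathfrak{F}$ (and the appeal to \cite{muller1997stochastic}) do the work. For $\mathfrak{F}\subseteq R_{\mathfrak{F}}$: fix $u\in\mathfrak{F}$, and suppose $F\succeq_{\mathfrak{F}}G$. By the very definition of $\succeq_{\mathfrak{F}}$, inequality (\ref{Ineq:ineqGandF}) holds for \emph{every} member of $\mathfrak{F}$, in particular for $u$; hence $u\in R_{\mathfrak{F}}$. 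This uses nothing about $\mathfrak{F}$ beyond $u\in\mathfrak{F}$.

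For the reverse inclusion I would argue by contraposition via a separation argument, which is exactly the content underlying Corollary~3.8 of \cite{muller1997stochastic}. Suppose $u$ is a bounded function on $[a,b]$ with $u\notin\mathfrak{F}$. Since $\mathfrak{F}$ is a convex cone that is closed (under pointwise convergence, with the implicit uniform bound available on the bounded domain $[a,b]$), a Hahn--Banach separation theorem produces a nonzero continuous linear functional, represented by a finite signed Borel measure $\mu$ on $[a,b]$, such that $\int_a^b v\,d\mu\ge 0$ for all $v\in\mathfrak{F}$ while $\int_a^b u\,d\mu<0$ (for a closed convex cone the separating functional can be taken nonnegative on the cone because the cone contains $0$ and is scale-invariant). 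Because $\mathfrak{F}$ contains all constant functions, applying the inequality $\int v\,d\mu\ge 0$ to $v\equiv 1$ and $v\equiv -1$ forces $\mu([a,b])=0$. Writing the Jordan decomposition $\mu=\mu^{+}-\mu^{-}$, the common mass $m:=\mu^{+}([a,b])=\mu^{-}([a,b])$ is strictly positive (otherwise $\mu=0$), so $F:=\mu^{+}/m$ and $G:=\mu^{-}/m$ are probability distributions on $[a,b]$. Then $\int_a^b v\,d\mu\ge 0$ for all $v\in\mathfrak{F}$ rewrites as $\int_a^b v\,dF\ge\int_a^b v\,dG$ for all $v\in\mathfrak{F}$, i.e.\ $F\succeq_{\mathfrak{F}}G$, whereas $\int_a^b u\,d\mu<0$ says $\int_a^b u\,dF<\int_a^b u\,dG$, so (\ref{Ineq:ineqGandF}) fails for $u$. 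Hence $u\notin R_{\mathfrak{F}}$, which is the desired contrapositive.

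The main obstacle is the separation step: one must fix a topology on the ambient function space in which $\mathfrak{F}$ is genuinely closed and in which continuous linear functionals are represented by finite signed measures, and reconcile this with the fact that the elements of $\mathfrak{F}$ are bounded by possibly different constants. This is precisely the delicate point handled in \cite{muller1997stochastic}. In the write-up I would therefore simply verify that $\mathfrak{F}$ meets the hypotheses of Corollary~3.8 there---convex cone, contains the constants, closed under pointwise convergence---quote the conclusion that the maximal generator equals $\mathfrak{F}$, and include the short normalization computation above (signed measure of total mass zero $\mapsto$ pair of distributions $F,G$) to make the passage from ``separating measure'' to ``$F\succeq_{\mathfrak{F}}G$'' transparent.
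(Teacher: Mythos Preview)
Your proposal is correct and ultimately takes the same route as the paper: both reduce the claim to Corollary~3.8 of \cite{muller1997stochastic}. The paper gives no independent argument at all---it simply states that the proposition follows from that corollary---whereas you additionally sketch the underlying Hahn--Banach separation idea and the signed-measure normalization; this extra exposition is sound and helpful, but since you correctly identify the topological subtlety and defer its resolution to M\"uller, your write-up and the paper's amount to the same proof by citation.
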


It is easy to see that $U_{n,[a,b]}$ is a convex cone that contains the constant functions. The closure of $U_{n,[a,b]}$ denoted by $cl(U_{n,[a,b]})$ can be written by divided differences \citep{denuit1999stochastic}.  \cite{light2021family} show that $LP_{n,[a,b]}$ is a convex cone containing the constant functions and closed under pointwise convergence. Hence, the following Corollary follows. 
\begin{corollary} \label{Corr:maximal generator}
    The maximal generator of the $(n+1)$-th degree bounded stochastic dominance order $\succeq _{n+1,[a,b]-BSD}$ is $ cl(U_{n,[a,b]} \cap LP_{n,[a,b]}) $. 
\end{corollary} 

Interestingly, in the proof of Theorem \ref{Main:Thm} we show that  
$U_{\alph} \cap LP _{\alph}= U_{\alph} \cap AP _{\alph} = \mathcal{G}_{n,[a,b]}$ 
where for an integer $n \geq 1$ and $a<b$ we denote 
\begin{equation} \label{Eq:G_set} \mathcal{G}_{n,[a,b]} := \{u \in U_{\alph}: u^{(k)}(b) = 0 \text{ } \forall k=1,\ldots,n-1  \}.
\end{equation}
Hence, the maximal generator of the $(n+1)$-th degree bounded stochastic dominance order is also given by $cl (\mathcal{G}_{n,[a,b]})$.

\begin{remark}
   Given Proposition \ref{Prop: maximal generator}, we may consider the maximal generator of the \((n+1)\)-th bounded stochastic dominance order as represented by the set:
\[
\mathcal{W}_{n} := \{ u:[a,b] \rightarrow \mathbb{R} \mid \exists \mu \in \mathcal{M}_{[a,b]}, \text{  } u(x) = -\int \max (c-x)^{n} \, \mu(dc) \},
\]
where \(\mathcal{M}_{[a,b]}\) denotes the set of non-negative Borel measures on \([a,b]\) (also see \cite{fishburn1976continua}). The set \(\mathcal{W}_{n}\) is a closed convex cone, and along with constant functions, it generates the \((n+1)\)-th bounded stochastic dominance order. One way to view this paper's main result is that it characterizes this generator through utility functions, providing a decision-theoretic interpretation of the \((n+1)\)-th degree bounded stochastic dominance order.
\end{remark}

\section{Expected Value and Downside Risk Trade-offs under LPM Dominance}
One aspect of bounded stochastic dominance, evident in the set $AP_{n,[a,b]}$ for $n\geq 2$, is the condition that risk aversion approaches infinity for values near the upper bound of the lottery's support, $b$. This condition appears counterintuitive in standard decision theory because it requires marginal utility to be $0$ at the endpoint of the support, implying absolute satiation at that point. However, in typical economic applications, a decision maker's absolute satiation point $c$ may be much larger than the maximum possible realization $b$. This can limit the applicability of bounded stochastic dominance orders, as the generated set of utility functions may not be economically sensible in certain contexts.

This infinite risk aversion at the upper bound stems from the inherent structure of the bounded stochastic dominance order. Specifically, lower partial moments (LPMs) of the form $\max\{t-x,0\}^n$, for $t\in[a,b]$, focus on downside risk within the interval $[a,b]$. Here, the boundary at $b$ represents a threshold beyond which outcomes are either impossible or excluded from consideration. Because this formulation inherently ignores values beyond $b$, the marginal utility at $b$ is forced to $0$, which leads to infinite risk aversion at that point. 

To address this, in this section we consider scenarios where the decision maker's utility is defined over an extended interval $[a,c]$, with $c>b$. This decouples the lottery's maximum payoff from the decision maker's satiation point. In this case, the Arrow-Pratt lower bound becomes $\frac{n-1}{c-x}$, which remains finite over the entire relevant support $[a,b]$ of the random variables. Consequently, the set of utility functions under consideration becomes more reasonable for such lotteries. We now characterize these orders in terms of conditions on lower partial moments.

The next result makes precise how extending the utility domain from $[a,b]$ to $[a,c]$, while keeping the lotteries supported on $[a,b]$, captures a trade-off between expected value and downside protection. When $c=b$, we recover the original bounded stochastic dominance order on $[a,b]$; as $c$ tends to infinity we get the standard stochastic dominance order. The intermediate case $b<c<\infty$  separates the lottery's maximal payoff from the decision maker's satiation point. 
For $n=2$, the characterization is especially transparent. The conditions on $[a,b]$ are the usual second-order LPM inequalities, while the additional condition at $t=c$ can be written as
\[
LPM_{2,b}(F)-LPM_{2,b}(G)
+
2(c-b)\left(LPM_{1,b}(F)-LPM_{1,b}(G)\right)\geq 0.
\]
The term $LPM_{1,b}(F)-LPM_{1,b}(G)=\mu_G-\mu_F$ measures the mean advantage of $G$ over $F$. Thus, a lottery $G$ with a lower mean can still be preferred by all decision makers in the relevant generator, provided that its second-order downside protection at $b$ is sufficiently stronger; the distance $c-b$ determines how much mean loss must be compensated in this way.
 For $n=3$, the same mechanism remains, but an additional curvature restriction appears: when $G$'s mean advantage is accompanied by a second-order LPM disadvantage at $b$, its third-order LPM gain must be large enough to compensate for that loss. 

\begin{theorem} \label{Thm:Extended_BSD_n23}
Let $F$ and $G$ be two distribution functions supported on $[a,b]$, and let $c>b$. The following two characterizations hold.

\begin{enumerate}
    \item[(i)] For $n=2$, we have 
    $\int_a^b u(x)dG(x)\geq \int_a^b u(x)dF(x)$ for every 
    $u\in U_{2,[a,c]}\cap LP_{2,[a,c]}=U_{2,[a,c]}\cap AP_{2,[a,c]}$
    if and only if
    \begin{equation*}
    LPM_{2,t}(F)\geq LPM_{2,t}(G)
    \quad \text{for all } t\in [a,b]\cup\{c\}.
    \end{equation*}

    \item[(ii)] For $n=3$, we have 
    $\int_a^b u(x)dG(x)\geq \int_a^b u(x)dF(x)$ for every 
    $u\in U_{3,[a,c]}\cap LP_{3,[a,c]}=U_{3,[a,c]}\cap AP_{3,[a,c]}$
    if and only if the following conditions hold:
    \begin{enumerate}
        \item[(a)] 
        \begin{equation*}
        LPM_{3,t}(F)\geq LPM_{3,t}(G)
        \quad \text{for all } t\in [a,b]\cup\{c\}.
        \end{equation*}

        \item[(b)] If 
        \begin{equation*}
        LPM_{1,b}(F)-LPM_{1,b}(G)>0
        \end{equation*}
        and
        \begin{equation*}
        -2(c-b)\left(LPM_{1,b}(F)-LPM_{1,b}(G)\right)
        <LPM_{2,b}(F)-LPM_{2,b}(G)<0,
        \end{equation*}
        then
        \begin{equation*}
        4\left(LPM_{1,b}(F)-LPM_{1,b}(G)\right)
        \left(LPM_{3,b}(F)-LPM_{3,b}(G)\right)
        \geq
        3\left(LPM_{2,b}(F)-LPM_{2,b}(G)\right)^2.
        \end{equation*}
    \end{enumerate}
\end{enumerate}
\end{theorem}

\section{Examples and Applications}

In this section we present some examples and applications of Theorem \ref{Main:Thm}.

\subsection {Decision Making and the Arrow-Pratt Risk Aversion Measure} \label{Sec:pratt}

For a decision maker with a utility function $u$, \cite{pratt1964risk} recognized that $-u^{(2)}/u^{(1)}$ is a useful measure for the decision maker's risk aversion strength. The theory provided in \cite{pratt1964risk} focuses on small changes in the  lottery that the decision maker faces and the risk premium that is associated with that lottery.  
Theorem \ref{Main:Thm} provides a connection between the global boundness of the risk aversion measure $-u^{(2)}/u^{(1)}$ to the decision maker's ranking over general lotteries. In particular, if $F \succeq_{n+1,[a,b]-BSD} G$, then $-u^{(2)}(x)/u^{(1)}(x)  \geq (n-1)/(b-x)$ for all $x \in (a,b)$ and $u \in U_{n,[a,b]}$ implies that the decision maker prefers $F$ to $G$.

As an example, consider a simple case where $G$ yields $a$ dollars with probability $\theta ^{n}$ and $b$ dollars with probability $1 - \theta^{n}$, and $F$ yields $\theta a + (1-\theta)b$ dollars with probability $1$ for $\theta \in (0,1)$, $b>a$. Then it is easy to check that $F \succeq_{n+1,[a,b]-BSD} G$. Hence, Theorem \ref{Main:Thm} shows that if a decision maker has the changing derivative property, i.e., $u \in U_{n,[a,b]}$ and a globally bounded Arrow-Pratt risk aversion measure $-u^{(2)}(x)/u^{(1)}(x) \geq (n-1)/(b-x)$ for all $x \in (a,b)$, then the decision maker prefers $F$ to $G$. Note that $G$'s expected value is higher than $F$'s expected value so not every risk averse decision maker prefers $F$ to $G$. Hence, our results provide non-trivial rankings of random variables for decision makers that have a globally bounded from below Arrow-Pratt measure of risk aversion. 

We can also use Theorem \ref{Thm:Extended_BSD_n23} to consider the more general case in which the lotteries are supported on $[a,b]$, but the endpoint that determines the lower bound on the Arrow--Pratt measure is some $c>b$. To see this in the simplest case, take $n=2$. Let $G$ yield $a$ with probability $\theta^2$ and $b$ with probability $1-\theta^2$, and let $F$ yield $\hat\theta a+(1-\hat\theta)b$ with probability $1$. By Theorem \ref{Thm:Extended_BSD_n23}, applied with $F$ and $G$ interchanged, 
$F$ is preferred to $G$ by every decision maker in 
$U_{2,[a,c]}\cap AP_{2,[a,c]}$ if and only if
$
LPM_{2,t}(G)\geq LPM_{2,t}(F)
\quad \text{for all } t\in [a,b]\cup\{c\}.
$ 
For $t\in [a,b]$, these inequalities hold whenever $\hat\theta\leq \theta$. 
The additional condition at $t=c$ is
\[
\theta^2(c-a)^2+(1-\theta^2)(c-b)^2
\geq
\left(c-\hat\theta a-(1-\hat\theta)b\right)^2.
\]
Equivalently,
\[
\hat\theta\leq \widetilde{\theta}(c):=
\frac{
\sqrt{\theta^2(c-a)^2+(1-\theta^2)(c-b)^2}-(c-b)
}{b-a}.
\]
Therefore, whenever
$
\theta^2<\hat\theta\leq \widetilde{\theta}(c) 
$ 
the lottery $G$ has a higher expected value than $F$, but every decision maker in 
$U_{2,[a,c]}\cap AP_{2,[a,c]}$ prefers $F$ to $G$.
In the non-trivial case where $G$ has a higher expected value than $F$ we have $\theta^2<\hat\theta$ so the relevant trade-off is determined by combining this condition with the requirement $\hat\theta\leq \widetilde{\theta}(c)$.

When $c=b$, we obtain $\widetilde{\theta}(b)=\theta$. Hence the dominance condition reduces to the original condition $\hat\theta\leq \theta$ discussed above.  
At the other extreme, $\lim_{c\to\infty}\widetilde{\theta}(c)=\theta^2$. Hence, as $c$ tends to infinity, the interval $(\theta^2,\widetilde{\theta}(c)]$ shrinks to the empty set. This recovers the intuition from the usual (unbounded) stochastic-dominance criterion: if $G$ has a strictly higher expected value than $F$, then $F$ cannot dominate $G$. 
For $b<c<\infty$, the upper bound $\widetilde{\theta}(c)$ lies strictly between $\theta^2$ and $\theta$. Thus, there is still a non-empty range of values of $\hat\theta$ for which $G$ has a higher expected value but $F$ is preferred by all decision makers in the generator. As $c$ increases, this range becomes smaller. Intuitively, increasing $c$ means that stronger downside-risk protection is needed to compensate for the lower expected value of $F$.

Using Lemma \ref{Lemma:Taylor} we can easily generate functions that belong $AP_{n,[a,b]} \cap U_{n,[a,b]}$ from functions that belong to $U_{n,[a,b]}$. For example, interestingly, a closely related class of utility functions to the popular class of constant relative risk aversion (CRRA) utility functions $u_{\gamma}(x) = x^{1-\gamma} / (1-\gamma)$ belongs to $U_{2,[a,b]} \cap AP_{2,[a,b]}$. From Lemma \ref{Lemma:Taylor}, the functions
 \begin{equation} \label{Eq: Almost CRRA}
     v_{\gamma}(x)= u_{\gamma}(x) - \frac{x}{b^{\gamma}}
      \end{equation}
for $\gamma > 0$,  $\gamma \neq 1$ and $v_{1}(x)= \log(x) - x/b$ for $\gamma = 1$ belong to $U_{2,[a,b]} \cap AP_{2,[a,b]}$. Decision makers with a utility function $v$  balance risk and expected value. If the expected values of $F$ and $G$ are the same, they behave exactly as CRRA decision makers, but if the expected values are different, then both the difference between the expected values and the behavior of the CRRA decision maker are taken into account when deciding between $F$ and $G$. The magnitude of balancing depends on $b$.

\subsection{Prudence and Savings Decisions} \label{Sec:Prudence}

The impact of future income uncertainty, particularly on savings decisions, has been extensively analyzed in economic research. Commonly, the literature employs stochastic orders such as second order stochastic dominance (SOSD) or higher orders that impose a ranking over expectations of random incomes. It is well known that in a two-period consumption-savings model, if the risk associated with labor income increases in the sense of SOSD, a prudent agent (i.e., an agent with a convex marginal utility function) will increase their savings \citep{sandmo1970effect} (see \cite{nocetti2015robust}, \cite{light2017precautionary}, and   \cite{bommier2018risk}) for various recent extensions of this result).  Additionally, an increase in the expected present value of future labor income tends to reduce current savings.
However, when comparing two different random incomes, where one is riskier but has a higher expected value, the savings behavior of a prudent agent becomes less clear. In this section, we employ our characterization of bounded stochastic dominance orders to analyze such scenarios. We show that comparisons can be made if the agent’s prudence measure is globally bounded from below. We now describe the consumption-savings model. 

Consider an agent who must decide how much to save and how much to consume, with their next period’s income being uncertain. If the agent has an initial wealth of \(x\) and chooses to save an amount \(0 \leq s \leq x\), then the utility from the first period’s consumption is \(u(x-s)\), and the utility from the second period is \(u(Rs + y)\), where \(y\) represents the uncertain income, and \(R\) is the rate of return which is assumed to be deterministic for simplicity. The utility function \(u\) characterizes the agent’s  preferences and is assumed to be strictly increasing, strictly concave, and continuously differentiable.

The agent's objective is to maximize the expected utility:
\begin{equation*}
w(s, F) := u(x - s) + \int_{0}^{\overline{y}} u(Rs + y) dF(y),
\end{equation*}
where \(F\) is the distribution function of the next period's income \(y\), supported on \([0, \overline{y}]\).

Let \(\rho(F) = \ensuremath{\operatorname*{argmax}}_{s \in C(x)} w(s, F)\) denote the optimal savings under the distribution \(F\), where \(C(x) := [0, x]\) represents the range of feasible savings levels given the agent's wealth \(x\).

The prudence of an agent is quantified by the measure \(-u'''/u''\) as discussed in \cite{kimball1990precautionary}. The main result of this section establishes that if the prudence measure is globally bounded from below, then the agent saves more under $F$ than under $G$ when \(G \succeq_{n+1,[a,b]-BSD} F\) where the interval $[a,b]$ represents potential future consumption values. The result extends the classical finding that prudent agents tend to save more under greater uncertainty in labor income which corresponds to \(n=1\) to more complex situations where the agent faces future random incomes that are riskier yet potentially have higher expected values. Importantly, this result is global in the sense that the prudence measure's lower bound applies universally rather than locally, which is different than the typical local analyses studied in previous literature \citep{kimball1990precautionary}.
In particular, the next proposition shows that if the prudence measure is bounded from below $-u'''(z)/u''(z) \geq (n-1)/(Rx+\overline{y}-z)$ and $-u \in U_{n,[0,Rx +\overline{y}]}$ then $ \int_{0}^{Rx +\overline{y}} \max\{c-x,0\}^{n} dF(x) \ge \int_{0}^{Rx +\overline{y}} \max\{c-x,0\}^{n} dG(x)$ for all $c \in [0,Rx +\overline{y}]$ implies that $\rho(F) \geq \rho(G)$. That is, the agent increases savings when moving from $G$ to $F$, where $F$ is riskier and possibly has a higher expected value than $G$, when the prudence measure is bounded from below.

\begin{proposition} \label{prop-consumption}
Let $n$ be a positive integer. Assume that the agent's marginal utility $-u^{(1)} \in U_{n,[0,Rx +\overline{y}]} \cap AP_{n,[0 ,Rx +\overline{y}]}$, i.e., the agent prudence measure is bounded from below. Then  $G  \succeq _{n+1 ,[0 ,Rx +\overline{y}]-BSD} F$ implies that $\rho(F) \geq \rho(G)$.
\end{proposition}

    \begin{remark}
In Proposition \ref{prop-consumption}, the endpoint $Rx+\overline{y}$ is the largest possible second-period consumption, and it is also the endpoint that determines the lower bound on the agent's prudence measure. 
As in Section \ref{Sec:pratt}, one can use Theorem \ref{Thm:Extended_BSD_n23} to separate these two roles. Specifically, one may choose an endpoint $C>Rx+\overline{y}$ and impose the lower bound $-u'''(z)/u''(z)\geq (n-1)/(C-z)$ on the relevant consumption range. This keeps the prudence bound finite on the support of second-period consumption. The corresponding LPM conditions are then obtained by applying Theorem \ref{Thm:Extended_BSD_n23} to the income lotteries with the larger endpoint. 
\end{remark}
\subsection{Relation to $n$-convex Functions and Inequalities}\label{sec:n-convex}

Theorem \ref{Main:Thm} has non-trivial implications for $n$-convex functions.\footnote{The set of $n$-convex functions has been studied in convex geometry (e.g.,  \cite{lovasz1993random}) and in economics (e.g.,  \cite{jensen2017distributional}).} In many applications in economics, operations research, and mathematics a subset of convex functions is studied in order to obtain stronger theoretical results than those that can be proven for convex functions (for example, log-convex functions, strongly convex functions, and $n$-convex functions). Corollary \ref{Cor:convex} establishes a useful characterization for $n$-convex functions. We  provide a connection between the convexity of $f(x)^{1/n}$, i.e.,  the $n$-convexity of $f$, to the convexity of $f(x^{1/n})$. This Corollary can be used to prove interesting inequalities for $n,[a,b]$-concave functions. For example, Corollary \ref{Corr:jensen}  establishes a  Jensen-type inequality for  $n,[a,b]$-concave functions that is tighter than the standard  Jensen's inequality for concave functions. The proofs are deferred to the Appendix.  

\begin{corollary} \label{Cor:convex}
Let $a<b$ and let $n$ be a positive integer. Assume that $f \in U_{n,[a,b]}$. Then $g_{n}(x):=(f(b)-f(x))^{1/n}$ is convex on $[a,b]$ if and only if $k_{n}(x):=-f(b-x^{1/n})$ is convex on $[0,(b-a)^{n}]$.
\end{corollary}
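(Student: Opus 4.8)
The plan is to prove Corollary~\ref{Cor:convex} by a direct change-of-variables argument, exploiting the fact that both conditions describe convexity of a composite function obtained from $f$ on $[a,b]$. First I would set up the substitution $x = b - t^{1/n}$, or equivalently $t = (b-x)^n$, which maps $[0,(b-a)^n]$ onto $[a,b]$ bijectively and (being a homeomorphism that is monotone) matches up the two domains described in the statement. Under this substitution one has $k_n(t) = -f(b - t^{1/n}) = -f(x)$, and the quantity $g_n(x)^n = f(b) - f(x)$ equals $f(b) + k_n(t)$. So the content of the corollary is: \emph{the function $t \mapsto f(b) + k_n(t)$ is the $n$-th power of a nonnegative convex function of $x = b - t^{1/n}$ if and only if $k_n$ itself is convex in $t$}. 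The role of the hypothesis $f \in U_{n,[a,b]}$ is exactly to guarantee (via the changing-derivative property, in particular $f^{(1)} \ge 0$ and $f^{(2)} \le 0$) that $f(b) - f(x) \ge 0$ on $[a,b]$ so that the $1/n$-th root is well defined and real, and that $f$ is smooth enough for the computations below.

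Next I would carry out the differentiation. Writing $h(x) := f(b) - f(x) = g_n(x)^n \ge 0$, with $h$ decreasing ($h^{(1)} = -f^{(1)} \le 0$) and $h^{(2)} = -f^{(2)} \ge 0$, the claim ``$g_n$ convex'' is, for $h > 0$, the inequality $g_n^{(2)} = \tfrac{1}{n} h^{1/n - 2}\bigl(h h^{(2)} - \tfrac{n-1}{n}(h^{(1)})^2\bigr) \ge 0$, i.e. $n\, h\, h^{(2)} \ge (n-1)(h^{(1)})^2$. On the other side, for $k_n(t) = -f(x) = h(x) - f(b)$ with $x = b - t^{1/n}$, I would compute $k_n^{(1)}$ and $k_n^{(2)}$ using $\frac{dx}{dt} = -\tfrac{1}{n} t^{1/n - 1}$ (note $t^{1/n} = b - x$, so $\frac{dx}{dt} = -\tfrac{1}{n}(b-x)^{1-n}$), and the claim ``$k_n$ convex'' reduces, after clearing positive factors, to the same algebraic inequality $n\, h\, h^{(2)} \ge (n-1)(h^{(1)})^2$ — here one uses that $h^{(1)} = -f^{(1)}(x)\,\frac{dx}{dt}\cdot(\text{sign bookkeeping})$ works out so that the cross terms line up. The key observation driving the whole proof is that both second-derivative conditions, after the substitution, collapse to the single pointwise inequality relating $h$, $h^{(1)}$, $h^{(2)}$; so convexity of $g_n$ at the point $x$ is equivalent to convexity of $k_n$ at the corresponding point $t = (b-x)^n$.

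The main obstacle I anticipate is handling the boundary and degenerate cases cleanly rather than the bulk computation. Specifically: (i) at $x = b$ we have $h(b) = 0$, so $g_n$ may fail to be twice differentiable there and the division by $h$ in the formula above is illegitimate — one must argue convexity up to the endpoint by continuity or by a limiting/monotone-slope argument, and similarly $k_n$ at $t = 0$ involves $t^{1/n-1} \to \infty$ for $n \ge 2$; (ii) if $h$ vanishes on a subinterval (i.e. $f$ is locally constant near $b$), both $g_n$ and $k_n$ are affine there and convexity is automatic, so these regions can be excised. I would dispose of these by first proving the equivalence on the open set where $h > 0$ via the pointwise-inequality identification, then extending to the closed intervals using that a continuous function on an interval is convex iff it is convex on a dense open subset together with the appropriate one-sided behavior at endpoints (or, more robustly, by noting convexity is equivalent to the slope $ (g_n(y)-g_n(x))/(y-x)$ being nondecreasing, a condition stable under the monotone reparametrization $t \leftrightarrow x$). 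A cleaner alternative, which I would mention as a fallback, is to avoid derivatives entirely: since $t \mapsto x = b - t^{1/n}$ is a decreasing bijection, convexity statements can be transported by composing with this map and using the elementary fact that $\phi$ is convex iff $\phi^{1/n}$ composed with an appropriate power map is convex — but the derivative computation is the most transparent route given that $f \in U_{n,[a,b]} \subset C^{n+1}$ already grants the needed smoothness in the interior.
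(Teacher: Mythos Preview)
Your proposal contains a genuine gap at the central step. You claim that, after the change of variables, the two second-derivative conditions ``collapse to the single pointwise inequality $n\,h\,h^{(2)} \ge (n-1)(h^{(1)})^2$.'' This is false. With $h(x)=f(b)-f(x)$, a correct computation gives:
\begin{itemize}
\item $g_n^{(2)}(x)\ge 0$ is equivalent (where $h>0$) to
\[
n\,h(x)\,h^{(2)}(x)\;\ge\;(n-1)\bigl(h^{(1)}(x)\bigr)^2,
\]
i.e.\ the $LP_{n,[a,b]}$ condition $\dfrac{(f(x)-f(b))\,f^{(2)}(x)}{(f^{(1)}(x))^{2}}\ge \dfrac{n-1}{n}$.
\item $k_n^{(2)}(t)\ge 0$ is equivalent (with $x=b-t^{1/n}$, $t>0$) to
\[
(n-1)\,h^{(1)}(x)+(b-x)\,h^{(2)}(x)\;\ge\;0,
\]
i.e.\ the $AP_{n,[a,b]}$ condition $(n-1)f^{(1)}(x)+(b-x)f^{(2)}(x)\le 0$.
\end{itemize}
These are \emph{not} the same algebraic inequality: the first involves $f(b)-f(x)$ and is quadratic in the derivatives, the second involves $b-x$ and is linear in them. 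For $n\ge 2$ they can disagree for general increasing concave functions (this is exactly Proposition~\ref{Prop:notequal}, which exhibits a function in $AP_{n,[a,b]}\setminus LP_{n,[a,b]}$).

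What makes the corollary true is precisely that the two conditions \emph{do} coincide once one assumes $f\in U_{n,[a,b]}$, and that equivalence is the nontrivial content of Theorem~\ref{Main:Thm} (specifically the identity $U_{n,[a,b]}\cap AP_{n,[a,b]}=U_{n,[a,b]}\cap LP_{n,[a,b]}$, proved via Lemmas~\ref{Lemma: G=AP} and~\ref{prop:new}). The paper's proof of the corollary computes $k_n^{(2)}\ge 0\Leftrightarrow f\in AP_{n,[a,b]}$ and then invokes Theorem~\ref{Main:Thm} to pass to $f\in LP_{n,[a,b]}$, i.e.\ $g_n$ convex. Your direct change-of-variables argument cannot bypass this: the hypothesis $f\in U_{n,[a,b]}$ is doing real work beyond ensuring nonnegativity and smoothness, and a purely local computation will not recover it.
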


\begin{corollary} \label{Corr:jensen}
Let $a<b$ and let $n$ be a positive integer. Assume that $f \in U_{n,[a,b]}$ and $(f(b)-f(x))^{1/n}$  is convex on $[a,b]$. Then
$$ \mathbb{E}f(X) \leq f \left (b -(\mathbb{E}(b-X)^{n})^{1/n} \right ) \leq f(\mathbb{E}(X))$$
for every random variable $X$ on $[a,b]$.
\end{corollary}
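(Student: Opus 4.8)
The plan is to reduce both inequalities to classical Jensen inequalities through a change of variables, with Corollary~\ref{Cor:convex} supplying the concavity that does the work. First I would record what $f\in U_{n,[a,b]}$ buys us: the conditions $-f^{(1)}\le 0$ and $f^{(2)}\le 0$ say that $f$ is nondecreasing and concave on $[a,b]$, so $f(b)-f(x)\ge 0$ there and $g_n(x):=(f(b)-f(x))^{1/n}$ is a well-defined nonnegative function, convex by hypothesis, with $f(x)=f(b)-g_n(x)^n$. I would also check the obvious domain facts: for a random variable $X$ on $[a,b]$ we have $b-X\in[0,b-a]$, hence $(b-X)^n\in[0,(b-a)^n]$ and $\mathbb{E}(b-X)^n\in[0,(b-a)^n]$, so $(\mathbb{E}(b-X)^n)^{1/n}\in[0,b-a]$ and the point $b-(\mathbb{E}(b-X)^n)^{1/n}$ lies in $[a,b]$; likewise $\mathbb{E}X\in[a,b]$. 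Thus $f$ is only ever evaluated inside its domain, and integrability is automatic since $f$ is continuous on the compact interval $[a,b]$.

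For the left inequality $\mathbb{E}f(X)\le f\big(b-(\mathbb{E}(b-X)^n)^{1/n}\big)$, I would invoke Corollary~\ref{Cor:convex}: the assumed convexity of $g_n$ on $[a,b]$ is equivalent to convexity of $k_n(x)=-f(b-x^{1/n})$ on $[0,(b-a)^n]$, that is, the map $\varphi(x):=f(b-x^{1/n})$ is concave on $[0,(b-a)^n]$. Now set $Y:=(b-X)^n$, which takes values in $[0,(b-a)^n]$, and use the cancellation $Y^{1/n}=((b-X)^n)^{1/n}=b-X$ (valid because $b-X\ge 0$), so that $\varphi(Y)=f(b-(b-X))=f(X)$. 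Jensen's inequality applied to the concave $\varphi$ then gives $\mathbb{E}f(X)=\mathbb{E}\varphi(Y)\le\varphi(\mathbb{E}Y)=f\big(b-(\mathbb{E}(b-X)^n)^{1/n}\big)$, which is precisely the claim.

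For the right inequality $f\big(b-(\mathbb{E}(b-X)^n)^{1/n}\big)\le f(\mathbb{E}X)$, I would use monotonicity of $f$ to reduce to $b-(\mathbb{E}(b-X)^n)^{1/n}\le\mathbb{E}X$, equivalently $\mathbb{E}(b-X)\le(\mathbb{E}(b-X)^n)^{1/n}$, equivalently $(\mathbb{E}(b-X))^n\le\mathbb{E}(b-X)^n$; since $b-X\ge 0$ and $z\mapsto z^n$ is convex on $[0,\infty)$, this is again Jensen (the $L^1$-versus-$L^n$ comparison). I do not expect a genuine obstacle here — the only point requiring care is the direction bookkeeping inside Corollary~\ref{Cor:convex}, namely that ``$g_n$ convex'' corresponds to ``$f(b-x^{1/n})$ concave'' and that the substitution $Y=(b-X)^n$ is exactly the one that recovers $f(X)$ from $\varphi(Y)$; everything else is routine.
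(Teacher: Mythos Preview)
Your proof is correct and follows essentially the same approach as the paper: both invoke Corollary~\ref{Cor:convex} to get concavity of $x\mapsto f(b-x^{1/n})$, substitute $Y=(b-X)^n$ and apply Jensen for the left inequality, and then use monotonicity of $f$ together with the $L^1$-versus-$L^n$ comparison for the right inequality. Your version is in fact a bit more careful about domain and integrability checks than the paper's, but the argument is the same.
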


\section{Proof of Theorem \ref{Main:Thm} } \label{sec:proof}
Fix an integer $n \geq 1$ and $a<b$. We first define the following set of functions: 
$$\mathcal{G}_{n,[a,b]} := \{u \in U_{\alph}: u^{(k)}(b) = 0 \text{ } \forall k=1,\ldots,n-1  \}.$$ 

The proof proceeds with the following Lemmas. In Lemma \ref{Lemma: G=AP} we show that $\mathcal{G}_{n,[a,b]} = U_{\alph} \cap AP _{\alph} $. In Lemma \ref{prop:new} we show that $\mathcal{G}_{n,[a,b]} = U_{\alph} \cap LP _{\alph} $, and hence, $U_{\alph} \cap LP _{\alph}= U_{\alph} \cap AP _{\alph}$. Lemma \ref{lemm:aux} is a technical Lemma that is needed in order to prove Lemma \ref{Lemma: LPM iff G}. In Lemma \ref{Lemma: LPM iff G}   we show that $G \succeq_{n+1,[a,b]-BSD} F$ if and only if  $$ \int _{a}^{b}u(x)d(F-G)(x) \leq 0 \text { for all }  u \in \mathcal{G}_{n,[a,b]}.$$
Combining this with Lemma 1 and Lemma 2 completes the proof of Theorem \ref{Main:Thm}.

\begin{lemma} \label{Lemma: G=AP}
        We have $\mathcal{G}_{n,[a,b]} = U_{\alph} \cap AP _{\alph} $. 
\end{lemma}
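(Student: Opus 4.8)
The first thing to notice is that any $u \in U_{\alph}$ already satisfies $u^{(1)}\ge 0$ and $u^{(2)}\le 0$ on $[a,b]$ (these are the cases $k=1,2$ in the definition of $U_{\alph}$, which are present because $n+1\ge 2$), and these are exactly the first two defining inequalities of $AP_{\alph}$. So, working under the standing hypothesis $u\in U_{\alph}$, proving $\mathcal{G}_{n,[a,b]} = U_{\alph}\cap AP_{\alph}$ reduces to the single equivalence: $u^{(k)}(b)=0$ for $k=1,\dots,n-1$ if and only if $\phi(x):=(n-1)u^{(1)}(x)+u^{(2)}(x)(b-x)\le 0$ for all $x\in[a,b]$ (for $n=1$ both sides are vacuous, so assume $n\ge 2$). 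The workhorse is the family $\phi_j(x):=(n-j)u^{(j)}(x)+u^{(j+1)}(x)(b-x)$, $j=1,\dots,n$, for which one checks $\phi_1=\phi$, the recursion $\phi_j'=\phi_{j+1}$, the terminal identity $\phi_n(x)=(b-x)u^{(n+1)}(x)$, and $\phi_j(b)=(n-j)u^{(j)}(b)$.

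For the ``only if'' direction I would show, by downward induction on $j$ from $n$ to $1$, that $(-1)^{j+1}\phi_j\le 0$ on $[a,b]$; the case $j=1$ is exactly $\phi\le 0$. The base case $j=n$ follows from $\phi_n=(b-x)u^{(n+1)}$, $b-x\ge 0$, and the sign condition $(-1)^{n+1}u^{(n+1)}\le 0$ that comes with membership in $U_{\alph}$; no boundary hypothesis is used here. For the inductive step, the hypotheses $u^{(k)}(b)=0$ give $\phi_j(b)=0$ for every $j\le n-1$, hence $\phi_j(x)=-\int_x^b \phi_{j+1}(t)\,dt$, and multiplying by $(-1)^{j+1}$ turns the integrand into $(-1)^{j+2}\phi_{j+1}$, which is $\le 0$ by the inductive hypothesis; so $(-1)^{j+1}\phi_j(x)\le 0$.

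For the ``if'' direction, assume $\phi=\phi_1\le 0$ and set $h(x):=u^{(1)}(x)(b-x)^{-(n-1)}$ on $[a,b)$. A short computation gives $h'(x)=(b-x)^{-n}\phi_1(x)\le 0$, so $h$ is nonincreasing, and since $u^{(1)}\ge 0$ it is also nonnegative; hence $0\le h(x)\le h(a)<\infty$ on all of $[a,b)$. Suppose toward a contradiction that $u^{(k)}(b)\ne 0$ for some $k\le n-1$, and let $m$ be the smallest such index, so that $u^{(j)}(b)=0$ for $j<m$. Taylor's theorem with Lagrange remainder, applied to $u^{(1)}$ at the point $b$, then produces for each $x<b$ a point $c_x$ strictly between $x$ and $b$ with $u^{(1)}(x)=\frac{(x-b)^{m-1}}{(m-1)!}\,u^{(m)}(c_x)$, whence $h(x)=\frac{(-1)^{m-1}}{(m-1)!}\,u^{(m)}(c_x)\,(b-x)^{m-n}$. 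Because $m\le n-1$ forces $m-n\le -1$, we have $(b-x)^{m-n}\to\infty$ as $x\to b^-$, while $u^{(m)}(c_x)\to u^{(m)}(b)\ne 0$ by continuity; thus $|h(x)|\to\infty$, contradicting $0\le h\le h(a)$. Hence $u^{(k)}(b)=0$ for all $k=1,\dots,n-1$, which closes the equivalence and, with the reduction in the first paragraph, proves the lemma.

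The step I expect to be the main obstacle is the ``if'' direction: recovering $n-1$ separate boundary conditions from a single scalar inequality requires controlling the behaviour of $u^{(1)}$ as $x\to b$, and the delicate point is the choice of the auxiliary function $h$, which must be simultaneously bounded — using both $\phi\le 0$ (monotonicity) and $u^{(1)}\ge 0$ (nonnegativity) — and forced to blow up at $b$ unless every one of $u^{(1)}(b),\dots,u^{(n-1)}(b)$ vanishes. The ``only if'' direction is, by comparison, a routine telescoping induction once the functions $\phi_j$ and their values at $b$ have been written down.
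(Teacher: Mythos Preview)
Your proof is correct. The ``only if'' direction is essentially identical to the paper's: both introduce the same family of auxiliary functions (the paper writes $z^{(k)}$ where you write $\phi_{k+1}$), observe the recursion and the boundary values at $b$, and run a downward sign induction starting from the top; the paper anchors the induction at level $n-1$ via an integral inequality, while you anchor one step higher at $\phi_n=(b-x)u^{(n+1)}$, but this is a cosmetic difference.

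The ``if'' direction, however, is genuinely different. The paper reuses the family $z^{(k)}$: assuming $m$ is the smallest index with $u^{(m)}(b)\neq 0$, it notes $z^{(m-1)}(b)=(n-m)u^{(m)}(b)\neq 0$ and $z^{(k)}(b)=0$ for $k<m-1$, then propagates the sign downward through successive antiderivatives to force $z(x)>0$ near $b$, contradicting $z\le 0$. Your argument instead recognises the inequality $\phi\le 0$ as an integrating-factor statement: it says exactly that $h(x)=u^{(1)}(x)/(b-x)^{n-1}$ is nonincreasing, so $h$ is trapped in $[0,h(a)]$; a Taylor expansion of $u^{(1)}$ at $b$ then shows $h$ must blow up unless every $u^{(k)}(b)$ with $k\le n-1$ vanishes. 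Your route is shorter, avoids the even/odd case split in the paper, and makes the structure of the $AP$ condition transparent. One small remark: your Taylor formula as stated would be awkward for $m=1$, but this case never occurs, since $\phi(b)=(n-1)u^{(1)}(b)\le 0$ together with $u^{(1)}(b)\ge 0$ already forces $u^{(1)}(b)=0$; it would be worth saying this explicitly so that the Taylor step is applied only with $m\ge 2$.
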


\begin{proof}
The proof is immediate for $n=1$. Assume that $n \geq 2$. 
Let $u \in \mathcal{G}_{n,[a,b]}$. 
We will show that 
\begin{equation} (n-1) u^{(1)}(x) + u^{(2)}(x)(b -x) \leq 0. \label{Eq: AP}
\end{equation}
for all $x \in [a,b]$. 

Suppose that $n$ is an even number. Note that if $n$ is an even number then the fact that  $u \in \mathcal{G}_{n,[a,b]}$ implies that $u^{(n-1)}$ is convex. Hence, the derivative of $u^{(n-1)}$ is increasing, i.e., $u^{(n)}$ is increasing. Using the fact that $u^{(n-1)}(b) =0$  implies that for all $x \in [a,b]$ we have
\begin{equation}u^{(n-1)}(x) =u^{(n-1)}(b) - \int _{x}^{b}u^{(n)}(t)dt = - \int _{x}^{b}u^{(n)}(t)dt \leq - \int _{x}^{b}u^{(n)}(x)dt = - (b -x)u^{(n)}(x). \label{Jensen: ineq2}
\end{equation}
Similarly, if $n$ is an odd number for all $x \in [a,b]$ we have  
\begin{equation}u^{(n-1)}(x) =u^{(n-1)}(b) - \int _{x}^{b}u^{(n)}(t)dt = - \int _{x}^{b}u^{(n)}(t)dt \geq - \int _{x}^{b}u^{(n)}(x)dt = - (b -x)u^{(n)}(x). \label{Jensen: ineq2.1}
\end{equation}
 Thus, if $n = 2$ then inequality (\ref{Eq: AP}) holds.

To prove that inequality  (\ref{Eq: AP}) holds for $n \geq 3$, define
$$z(x) =   (n-1)u^{(1)}(x)  + u^{(2)}(x)(b -x)  \text { on } [a,b].$$ Note that $z^{(1)}(x) = (n -2)u^{(2)}(x) + u^{(3)}(x)(b -x)$,
and 
\begin{equation*}z^{(k)}(x) = (n-1-k) u^{(k +1)}(x) + u^{(k +2)}(x)(b -x)
\end{equation*}
for all $k=0,\ldots, n-1$. 
Because $u^{(k)}(b) =0$ for all $k=1,\ldots,n-1$, we have $z^{(k)}(b) =0$ for all  $k=0,\ldots,n-2$.  

Suppose first that $n$ is an even number. 
Inequality (\ref{Jensen: ineq2}) yields $z^{(n - 2)}(x) \leq 0$ for all $x \in [a ,b]$. Thus, $z^{(n -3)}$ is a decreasing function. Combining this with the fact that $z^{(n -3)}(b) =0$ implies that $z^{(n -3)}(x) \geq 0$ for all $x \in [a ,b]$. Using the same argument, repeatedly,  it follows that $z^{(j)}(x) \leq 0$ for all $x \in [a ,b]$ and for  all even numbers $0 \leq j \leq n-2$ and $z^{(j)}(x) \geq 0$ for all $x \in [a ,b]$ and for all odd numbers $0 \leq j \leq n-2$. In particular, $z^{(0)}(x) :=z(x) \leq 0$ for all $x \in [a ,b]$.  When $n$ is an odd number,  using inequality (\ref{Jensen: ineq2.1}), an analogous argument to the argument above shows that  $z(x) \leq 0$ for  all $x \in [a ,b]$.  We  conclude that inequality  (\ref{Eq: AP}) holds for every integer $n \geq 1$, i.e., $ u \in U_{\alph} \cap AP _{\alph}$. 

Now assume that $ u \in U_{\alph} \cap AP _{\alph} $. Then $z (x) \leq 0$ on $[a,b]$ which implies that $z(b) = (n-1)u^{(1)}(b) \leq 0 $. The fact that $u^{(1)} \geq 0 $ implies $u^{(1)}(b) = 0$. Thus,  we have $z(b)=0$. 

Assume in contradiction that there exists some $j = 2,\ldots,n-1$ such that $u^{(j)}(b) \neq 0$. Then there exists an integer $2 \leq m \leq n-1$ that satisfies  $u^{(m)}(b) \neq 0$ and $u^{(k)}(b) = 0$ for all  $k=1,\ldots,m-1$. We have  $z^{(k)}(b)=0$ for all $k=0,\ldots,m-2$ and $z^{(m-1)}(b) = u^{(m)}(b)(n-m) < 0$ if $m$ is an even number and $z^{(m-1)}(b) >0$ if $m$ is an odd number. Thus, if $m$ is an even number, then the   continuity of $z^{(m-1)}$ implies that $z^{(m-1)}(x) <0 $ for all $x \in [b-\epsilon,b]$ for some $\epsilon>0$. Combining this with the fact that $z^{(m-2)}(b)=0$ implies that there exists a $\delta >0$ such that $z^{(m-2)}(x) > 0$ for all $x \in (b-\delta,b)$. Using the fact that 
$z^{(k)}(b)=0$ for all $k=0,\ldots,m-2$ and the argument above repeatedly show that $z^{(0)}(x) :=z(x) > 0 $ for some $x$ close to $b$ which is a contradiction to $z (x) \leq 0$ on $[a,b]$. Similarly, if $m$ is an odd number then $z^{(m-1)}(b) > 0$ and $z^{(m-2)}(b)=0$ imply that $z^{(m-2)}(x) <0 $ for all $x \in (b-\epsilon,b)$ for some $\epsilon>0$ which leads to a contradiction from the argument above.      We conclude that $u^{(k)} (b) = 0$ for all $k=1,\ldots,n-1$. 
Thus, $u \in \mathcal{G}_{n,[a,b]}$.

\end{proof}

\begin{lemma}\label{prop:new}
We have $\mathcal{G}_{n,[a,b]} = U_{\alph} \cap LP _{\alph} $.
\end{lemma}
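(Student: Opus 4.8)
The plan is to characterize membership in $LP_{\alph}$ for functions already known to lie in $U_{\alph}$, and then match this with the boundary conditions $u^{(k)}(b)=0$ for $k=1,\ldots,n-1$ that define $\mathcal{G}_{n,[a,b]}$. Fix $u \in U_{\alph}$ and set $h(x) := u(b)-u(x)$, so that $h$ is non-negative, decreasing, with $h(b)=0$, $h^{(1)} = -u^{(1)}$, and more generally $h^{(k)} = -u^{(k)}$ for $k \geq 1$; the sign pattern $(-1)^k u^{(k)} \leq 0$ translates into $h^{(k)} \geq 0$ for all $k=1,\ldots,n+1$ (after the sign flip $h^{(1)}=-u^{(1)}\le 0$ becomes $h$ decreasing; one should track signs carefully here). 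Membership $u \in LP_{\alph}$ means $g(x) := h(x)^{1/n}$ is convex and decreasing on $[a,b]$. Since $h$ is decreasing, $g$ is automatically decreasing, so the content is convexity of $g$.

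First I would handle the case $n=1$, where $LP_{1,[a,b]}$ is just decreasing concave functions, $\mathcal{G}_{1,[a,b]} = U_{1,[a,b]}$ (the boundary condition is vacuous), and both coincide with $U_{1,[a,b]}$ trivially. For $n \geq 2$, the key computation is to differentiate $g = h^{1/n}$ twice. One gets
$$g^{(1)} = \tfrac{1}{n} h^{1/n-1} h^{(1)}, \qquad g^{(2)} = \tfrac{1}{n} h^{1/n - 2}\left( h h^{(2)} - \tfrac{n-1}{n} (h^{(1)})^2 \right),$$
so convexity of $g$ on the open interval is equivalent to $h h^{(2)} \geq \frac{n-1}{n}(h^{(1)})^2$, i.e. the elasticity condition displayed in the paper. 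The strategy for the inclusion $\mathcal{G}_{n,[a,b]} \subseteq LP_{\alph}$ is: given $u \in \mathcal{G}_{n,[a,b]}$, the vanishing derivatives $h^{(k)}(b)=0$ for $k=1,\ldots,n-1$ together with the sign conditions $h^{(k)} \geq 0$ let me write Taylor-type integral representations of $h$, $h^{(1)}$, $h^{(2)}$ near $b$ with remainder involving $h^{(n)}$; comparing these should yield the pointwise inequality $h h^{(2)} \geq \frac{n-1}{n}(h^{(1)})^2$ via a Cauchy–Schwarz or Chebyshev-type estimate on the $(n-1)$-fold iterated integrals of $h^{(n)}$ (this is the same mechanism as in the companion paper \cite{light2021family}, and I would invoke or adapt their argument). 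Alternatively, and perhaps more cleanly, I would lean on Lemma \ref{Lemma: G=AP} already established: $\mathcal{G}_{n,[a,b]} = U_{\alph} \cap AP_{\alph}$, so it suffices to prove $U_{\alph} \cap AP_{\alph} \subseteq LP_{\alph}$ and $\mathcal{G}_{n,[a,b]} \supseteq U_{\alph}\cap LP_{\alph}$; the first is a pointwise implication ``$-u^{(2)}/u^{(1)} \geq (n-1)/(b-x) \Rightarrow h h^{(2)} \geq \frac{n-1}{n}(h^{(1)})^2$'' once one also controls $h$ from below by $h^{(1)}(b-x)/n$-type bounds coming from the monotonicity of the derivatives.

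For the reverse inclusion $U_{\alph} \cap LP_{\alph} \subseteq \mathcal{G}_{n,[a,b]}$, suppose $u \in U_{\alph}$ with $g=h^{1/n}$ convex. I must show $u^{(k)}(b)=0$ for $k=1,\ldots,n-1$, equivalently $h^{(k)}(b)=0$ for those $k$. The idea: if $g$ is convex and $g \geq 0$ with $g(b)=0$, then near $b$ we have $g(x) \leq g(b) + g^{(1)}(b^-)(x-b) = -g^{(1)}(b^-)(b-x)$ (using $g(b)=0$), hence $h(x) = g(x)^n \leq C (b-x)^n$ for $x$ near $b$; this forces the first $n-1$ derivatives of $h$ at $b$ to vanish, since $h^{(k)}(b) \neq 0$ for some $k \le n-1$ would make $h$ of exact order $(b-x)^k$ near $b$, contradicting the $(b-x)^n$ bound. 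Combined with Lemma \ref{Lemma: G=AP}, this establishes the chain $\mathcal{G}_{n,[a,b]} = U_{\alph}\cap AP_{\alph} \subseteq U_{\alph} \cap LP_{\alph} \subseteq \mathcal{G}_{n,[a,b]}$, giving equality.

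The main obstacle I anticipate is the ``$\mathcal{G}_{n,[a,b]} \subseteq LP_{\alph}$'' direction: proving the elasticity inequality $h h^{(2)} \geq \frac{n-1}{n}(h^{(1)})^2$ from the boundary and sign data requires a genuine integral inequality rather than a pointwise manipulation, and one must be careful that it holds on the closed interval (including behavior at $a$ and $b$, where $h^{(1)}$ or $h$ may vanish and the ratio is a $0/0$ form). Borrowing the corresponding lemma from \cite{light2021family} — which studies exactly the $LP_{\alph}$ family — is the cleanest route, reducing the work here to the bookkeeping that relates their elasticity-of-marginal-utility condition to the vanishing-derivatives description of $\mathcal{G}_{n,[a,b]}$.
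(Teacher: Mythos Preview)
Your reverse inclusion $U_{\alph}\cap LP_{\alph}\subseteq \mathcal G_{n,[a,b]}$ is essentially right in spirit but you have the convexity inequality backwards: for convex $g$ the tangent lies \emph{below} the graph, so $g(x)\ge g(b)+g^{(1)}(b^{-})(x-b)$, not $\le$. What you actually want is the chord--above--graph property: for $x\in[x_{0},b]$ one has $g(x)\le \tfrac{g(x_{0})}{b-x_{0}}(b-x)$, hence $h(x)\le C(b-x)^{n}$, and then Taylor expansion of $h\in C^{n+1}$ at $b$ forces $h^{(k)}(b)=0$ for $k\le n-1$. With that fix your argument goes through, and it is in fact shorter than the paper's Steps~5--9, which instead run an inductive L'H\^opital--type recursion on the quantities $R_{u^{(k)}}(b)$.

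The real gap is in the forward inclusion $\mathcal G_{n,[a,b]}\subseteq LP_{\alph}$, where neither of your two routes works as stated. Route~(b) cannot be a ``pointwise implication'': Proposition~\ref{Prop:notequal} exhibits $g\in AP_{\alph}\setminus LP_{\alph}$, so the $AP$ inequality alone does not force the elasticity bound. Concretely, your approach needs $u(b)-u(x)\ge \tfrac{1}{n}(b-x)u^{(1)}(x)$, but setting $\phi(x)=u(b)-u(x)-\tfrac{1}{n}(b-x)u^{(1)}(x)$ one computes $\phi'(x)=-\tfrac{1}{n}\big[(n-1)u^{(1)}(x)+(b-x)u^{(2)}(x)\big]\ge 0$ from the $AP$ condition, and $\phi(b)=0$, so $\phi\le 0$ --- the bound goes the wrong way. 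Route~(a) also falls short: writing $h,h^{(1)},h^{(2)}$ as iterated integrals of $(-1)^{n}h^{(n)}\ge 0$ (using $h^{(k)}(b)=0$ for $k\le n-1$) and applying Cauchy--Schwarz to the moments yields only $h\,h^{(2)}\ge \tfrac{n-2}{n-1}(h^{(1)})^{2}$, strictly weaker than the required $\tfrac{n-1}{n}$; the sharper constant genuinely needs the monotonicity of $h^{(n)}$ coming from the $(n{+}1)$-st derivative sign, and that is not a standard Chebyshev estimate.

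The paper's proof of this direction is structurally different from both of your sketches: it runs an induction on $n$ by applying the hypothesis to $v:=-u^{(1)}\in\mathcal G_{n-1,[a,b]}$ to obtain $\tfrac{u^{(1)}u^{(3)}}{(u^{(2)})^{2}}\ge \tfrac{n-2}{n-1}$, then studies $R(x)=\tfrac{(u(x)-u(b))u^{(2)}(x)}{(u^{(1)}(x))^{2}}$ directly, deriving an expression for $R^{(1)}$ in which the inductive bound appears, showing $\liminf_{x\to b^{-}}R(x)\ge\tfrac{n-1}{n}$ via L'H\^opital, and finishing by a contradiction argument (if $R$ ever dipped below $\tfrac{n-1}{n}$ then at the first recovery point $R'$ would have the wrong sign).
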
 

\begin{proof} 
The proof is by induction on $n$. The base case, $n=1$, trivially holds. %Let $n\ge2$ be an integer and assume that $\mathcal{G}_{n-1,[a,b]} = U_{n-1,[a,b]} \cap LP _{n-1,[a,b]} $. We will prove the lemma by showing  that $- \mathcal{G}_{n,[a,b]} = - U_{n,[a,b]} \cap LP _{n,[a,b]} $ where for a set $A$ we have $-A := \{x: -x \in A \}$. 

For the sake of exposition, we divide the proof to 9 steps. Step 1 shows that we can restrict the analysis to a simpler class of functions. Steps 2-4 prove that $\mathcal{G}_{n,[a,b]} \subseteq  U_{n,[a,b]} \cap LP_{n,[a,b]}$. Steps 5-9 prove that $U_{n,[a,b]} \cap LP_{n,[a,b]} \subseteq  \mathcal{G}_{n,[a,b]}$.

{\bf Step 1:} If $\mathcal{G}_{n,[a,b]} =U_{n,[a,b]} \cap   LP_{n,[a,b]}$ for functions $u$ satisfying $|u^{(k)}(x)|>0$ for every $x<b$ and $k=1,\ldots, n$ then $\mathcal{G}_{n,[a,b]} =U_{n,[a,b]} \cap   LP_{n,[a,b]}$. 

Suppose that $\mathcal{G}_{n,[a,b]} =U_{n,[a,b]} \cap   LP_{n,[a,b]}$ for functions $u$ satisfying $|u^{(k)}(x)|>0$ for every $x<b$ and $k=1,\ldots, n$. Let $u\in \mathcal{G}_{n,[a,b]}$ and observe that $-\epsilon(b-x)^{n}\in \mathcal{G}_{n,[a,b]}$ for every $\epsilon > 0$. The set $\mathcal{G}_{n,[a,b]}$ is convex so the function $u_\epsilon(x)=u(x)-\epsilon(b-x)^{n}$ belongs to the set $\mathcal{G}_{n,[a,b]}$. Moreover, $|u_\epsilon^{(k)}(x)|>0$ for $x<b$ and $k=1,\ldots, n$. Thus, we conclude that $u_\epsilon \in U_{n,[a,b]} \cap   LP_{n,[a,b]}$. Because the set $  LP_{n,[a,b]}$ are closed under the pointwise topology, we have that when $\epsilon\to 0$, $u \in LP_{n,[a,b]}$, and hence, $u \in U_{n,[a,b]} \cap LP_{n,[a,b]}$. Because $-\epsilon(b-x)^{n} \in LP_{n,[a,b]}$, a similar argument implies the inclusion  $U_{n,[a,b]} \cap LP_{n,[a,b]} \subseteq \mathcal{G}_{n,[a,b]}$. Hence, $\mathcal{G}_{n,[a,b]} =U_{n,[a,b]} \cap   LP_{n,[a,b]}$. 

\medskip

%To following four steps show that $-\mathcal{G}_{n,[a,b]} \subseteq -U_{n,[a,b]} \cap LP_{n,[a,b]}$.% Thus, consider $u \in \mathcal{G}_{n,[a,b]}$ such that $u^{(2)}(x)>0$ for $x<b$, in the next four steps we show that $u \in U_{n,[a,b]} \cap LP_{n,[a,b]}$.

Let $u \in \mathcal{G}_{n,[a,b]}$. For the next steps we assume that $|u^{(k)}(x)|>0$ for every $x<b$ and $k=1,\ldots, n$. 

We define the function 
$$R(x)=\frac{(u(x)-u(b) )u^{(2)}(x)}{u^{(1)}(x)^2} \text { on } (a,b). $$ 
Because $u^{(1)}(x)>0$ for $x\in (a,b)$, the function $R$ is well-defined and differentiable on $(a,b)$. From the differentiability of $u$ it follows that showing that  $ u \in LP_{n,[a,b]}$ is equivalent to showing that for all $x \in (a,b)$ we have 
\begin{equation} \label{Eq: Lem2: 1}
R(x) =\frac{(u(x)-u(b) )u^{(2)}(x)}{u^{(1)}(x)^2}  \geq \frac{n-1}{n}.
\end{equation}

{\bf Step 2:} We claim that 
\begin{align}\label{eq:new_R}
    R^{(1)} (x) = \frac 1 {u^{(1)}(x)^2}\bigg[&-u^{(1)}(x)u^{(2)}(x)\left(\frac{n}{n-1}R(x)-1 \right ) \nonumber \\  &+ (u(x)-u(b) ) \left(u^{(3)}(x) - \frac{n-2}{n-1} \frac{u^{(2)}(x)^2}{u^{(1)}(x)} \right) \bigg]
\end{align}
for all $x \in (a,b)$.

Indeed, we have 
 \begin{align*}
R^{(1)} (x)&=\frac{1}{u^{(1)}(x)^4}\bigg[u^{(1)}(x)^3 u^{(2)}(x) + (u(x)-u(b))u^{(1)}(x)^2u^{(3)}(x) - 2 (u(x)-u(b))u^{(1)}(x)u^{(2)}(x)^2\bigg]\\
&= \frac{1}{u^{(1)}(x)^2}\bigg[u^{(1)}(x) u^{(2)}(x) + (u(x)-u(b))u^{(3)}(x) - 2 \frac{(u(x)-u(b))u^{(2)}(x)^2}{u^{(1)}(x)}\bigg]\\
&= \frac{1}{u^{(1)}(x)^2}\bigg[ u^{(1)}(x)u^{(2)}(x)\bigg(1-2 \underbrace{\frac{(u(x)-u(b))u^{(2)}(x)}{u^{(1)}(x)^2}}_{R(x)}\bigg) +  (u(x)-u(b))u^{(3)}(x)  \bigg]\\
&=\frac 1 {u^{(1)}(x)^2}\bigg[-u^{(1)}(x)u^{(2)}(x)(2R(x)-1) + (u(x)-u(b))u^{(3)}(x) \bigg] \\
&= \frac 1 {u^{(1)}(x)^2}\bigg[-u^{(1)}(x)u^{(2)}(x)\left(\frac{n}{n-1}R(x)-1\right) - \frac{n-2}{n-1} \underbrace{u^{(1)}(x)u^{(2)}(x) R(x)}_{\frac{(u(x)-u(b))u^{(2)}(x)^2}{u^{(1)}(x)}} + (u(x)-u(b))u^{(3)}(x) \bigg]\\
&= \frac 1 {u^{(1)}(x)^2}\bigg[-u^{(1)}(x)u^{(2)}(x)\left(\frac{n}{n-1}R(x)-1\right) + (u(x)-u(b)) \left(u^{(3)}(x) - \frac{n-2}{n-1} \frac{u^{(2)}(x)^2}{u^{(1)}(x)} \right) \bigg].
\end{align*}
%The first equality comes from Equation~\eqref{eq:R}. The second equality is simple algebra manipulation. The third equality comes from the definition of $R(x)$. 

{\bf Step 3:} Let $R(b) := \liminf_{x\to b^-} R(x)$, then we assert that  $R(b)\ge \frac{n-1}{n}$. 

First, observe that $\lim_{x\to b^-}{(u(x)-u(b)) u^{(2)}(x)}=0$ and  $\lim_{x\to b^{-}} u^{(1)}(x)^2=u^{(1)}(b)^2=0$. Hence,
\begin{align}
R(b)&=\liminf_{x\to b^-}\frac{(u(x)-u(b)) u^{(2)}(x)}{u^{(1)}(x)^2} \nonumber \\
&\geq \liminf_{x\to b^-}\frac{u^{(1)}(x) u^{(2)}(x) + (u(x)-u(b))u^{(3)}(x)} {2u^{(1)}(x)u^{(2)}(x)} \nonumber\\
&=\frac 1 2 + \liminf_{x\to b^-}  \frac{(u(x)-u(b))u^{(3)}(x)} {2u^{(1)}(x)u^{(2)}(x)} \nonumber\\
&=\frac 1 2 + \liminf_{x\to b^-}  \frac 1 2  \frac{(u(x)-u(b))u^{(2)}(x)}{u^{(1)}(x)^2} \frac{u^{(1)}(x) u^{(3)}(x)}{u^{(2)}(x)^2} \nonumber\\
&\ge \frac 1 2 +   \frac 1 2\liminf_{x\to b^-}  \frac{(u(x)-u(b))u^{(2)}(x)}{u^{(1)}(x)^2} \liminf_{x\to b^-} \frac{u^{(1)}(x) u^{(3)}(x)}{u^{(2)}(x)^2} \nonumber\\
& = \frac 1 2 +   \frac 1 2 \;R(b)\; \liminf_{x\to b^-} \frac{u^{(1)}(x) u^{(3)}(x)}{u^{(2)}(x)^2}. \label{ineq:s3}
\end{align}
The first inequality follows from the general version of L'Hopital rule. The second inequality follows using that $\liminf _{x \to b^{-} } f(x)g(x) \geq \liminf _{x \to b^{-} } f(x) \liminf _{x \to b^{-} } g(x) $ for any functions $f$ and $g$. 

Second,  note that the function $v(x)=-u^{(1)}(x)$ is in the set $ \mathcal{G}_{n-1,[a,b]}$. The inductive hypothesis implies that $v \in LP_{n-1,[a,b]}$. Then, we have that
$$\frac{(v(x)-v(b)) v^{(2)}(x)}{v^{(1)}(x)^2} \ge \frac{n-2}{n-1}.$$
Using $v(x)=-u^{(1)}(x)$ and $u^{(1)}(b)=0$, we derive that
$$ \frac{u^{(1)}(x) u^{(3)}(x)}{u^{(2)}(x)^2} \ge \frac{n-2}{n-1}.$$
Hence, $\liminf_{x\to b^-} \frac{u^{(1)}(x) u^{(3)}(x)}{u^{(2)}(x)^2} \ge \frac{n-2}{n-1}.$

To conclude this step we combine the last inequality with Inequality~\eqref{ineq:s3} obtaining
$$R(b) \ge \frac 1 2 + \frac 1 2 \; R(b) \; \frac{n-2}{n-1}.$$
This simplifies to $R(b) \ge \frac{n-1}{n}$.

{\bf Step 4:} We claim that $u\in LP_{n,[a,b]}$. That is, for all $x\in (a,b)$ we have that $R(x) \ge \frac{n-1}{n}$. 

Suppose for the sake of  contradiction that a $y \in (a,b)$ exists such that $R(y)<\frac{n-1}{n}$. Let $z=\inf\{w\in [y,b]| R(w)>R(y)\}$. Step 3 shows that $\liminf_{x\to b^-}R(x) \ge \frac{n-1}{n}> R(y)$, which in turn, implied that $z$ is well-defined and $z<b$. Moreover, because $R$ is continuous we have that $R(z)=R(y)<\frac{n-1}{n}$. From the definition of the infimum we have that
\begin{align}\label{con1_new}
\forall \epsilon>0 \; \exists w_\epsilon\in (z,z+\epsilon)\mbox{ such that } R(w_\epsilon)>R(y)=R(z).
\end{align}

%Because $-u \in \mathcal{G}_{n,[a,b]}$, we have (i) $u(z)-u(b)>0$, (ii) $u^{(1)}(z)< 0$ and  (iii) $u^{(2)}(z)>0$. 

As in Step 3, we take use $v(x)=-u^{(1)}(x)$. Note that $v$ is an  increasing function. From the inductive hypothesis to obtain that 
$$v^{(2)}(z) \leq \frac{n-2}{n-1} \frac{v^{(1)}(z)^2}{v(z)-v(b)}$$
Because $v(b)=- u^{(1)}(b)=0$, we conclude that 
$$u^{(3)}(z) - \frac{n-2}{n-1} \frac{u^{(2)}(z)^2}{u^{(1)}(z)} \geq 0.$$ 
Combining the above inequality and $R(z)<\frac{n-1}{n}$ and the facts that $u$ is strictly increasing and strictly concave, we get that 
$$ -u^{(1)}(z)u^{(2)}(z)\left(\frac{n}{n-1}R(z)-1 \right )  + (u(z)-u(b) ) \left(u^{(3)}(z) - \frac{n-2}{n-1} \frac{u^{(2)}(z)^2}{u^{(1)}(z)} \right) < 0$$
From Step 2 we conclude that 
$R^{(1)}(z)<0$. Thus, 
\begin{align}\label{con2_new}
\exists \epsilon>0 \mbox{ such that }\forall w \in (z,z+\epsilon) \text{ we have }  R(w) < R(z).
\end{align}
conditions~\eqref{con1_new} and ~\eqref{con2_new} are mutually impossible. Therefore, we conclude that for every $x\in (a,b)$, $R(x)\ge \frac {n-1}{n}$.

\medskip

We now show that  if $u\in U_{n,[a,b]} \cap LP_{n,[a,b]}$ then $u\in \mathcal{G}_{n,[a,b]}$.

%we need to show that for $- u\in U_{n,[a,b]} \cap LP_{n,[a,b]}$ we have  $- u^{(k)}(b)=0$ for $k=1,\ldots,n-1$. Let $-u \in  U_{n,[a,b]} \cap LP_{n,[a,b]}$. From an analogues argument to the argument in Step 5 we can assume that $(-1)^{(k)} u^{(1)} (x) < 0$ for every integer $k=1,\ldots,n-1$ and for all $x \in (a,b)$. 

%Assume first that $n=2$. We will show that $u^{(1)}(b)=0$. Suppose for the sake of a contradiction that $u^{(1)}(b)\neq 0$. Because $u^{(1)}$ is continuous there exists a  $\delta>0$ such that $\lim_{x\to b^-}u^{(1)}(x)^2>\delta$. Note that 
%$$\lim_{x\to b^-} (u(x)-(b))u^{(2)}(x)=\underbrace{\lim_{x\to b^-} (u(x)-u(b))}_{0}\underbrace{\lim_{x\to b^-} u^{(2)}(x)}_{u^{(2)}(b)}=0\;.$$
%Thus, 
%$$ \lim_{x\to b^-} \frac{(u(x) - u(b))u^{(2)}(x)}{u^{(1)}(x)^2} = \frac{\lim_{x\to b^-} (u(x) - %u(b))u^{(2)}(x)}{\lim_{x\to b^-} u^{(1)}(x)^2} =0\;.$$
%Because $u$ is twice differentiable with a continuous second derivative there exists an  $\epsilon>0$ such that for $x\in (b-\epsilon,b)$ we have $\frac{(u(x) - u(b))u^{(2)}(x)}{u^{(1)}(x)^2} < \frac{1}{2}$. Using equation (\ref{Eq: Lem2: 1}) we conclude that  $-u\notin LP_{2,[a,b]}$ which is a contradiction. We conclude that $u^{(1)}(b)=0$. 

%For $n>2$, we divide the argument into five steps. 

{\bf Step 5:} For $k=1,\ldots,n-2$ we have that $u^{(k)}(b)=0$.

 It can be shown that $ LP_{n,[a,b]} \subseteq  LP_{n-1,[a,b]}$ (see for example \cite{light2021family}). Hence, if $u\in U_{n,[a,b]} \cap LP_{n,[a,b]}$ then $u\in  U_{n,[a,b]} \cap LP_{n-1,[a,b]}\subseteq U_{n-1,[a,b]}\cap  LP_{n-1,[a,b]} $. By the inductive hypothesis we conclude that $u\in \mathcal{G}_{n-1,[a,b]}$. Thus, $u^{(k)}(b)=0$ for $k=1,\ldots,n-2$. 
 
 The next three steps conclude the proof by showing that $u^{(n-1)}(b)=0$.

{\bf Step 6:} For $k=0,\ldots,n-2$, let $$R_{u^{(k)}}(b)= \limsup_{x\to b^-}\frac{(u^{(k)}(x)-u^{(k)}(b)) u^{(k+2)}(x)}{u^{(k+1)}(x)^2}.$$ We assert that $R_{u^{(k)}}(b)\ge 0$ for  $k=0,\ldots,n-2$. 

Because $u$ is increasing and concave we have $(u(x)-u(b))u^{(2)}(x) \ge 0$. Thus, Step 6 holds for $k=0$. For $1 \leq k \leq n-2$, because $u^{(k)}(b)=0$ (Step 5), we get that 
$$(u^{(k)}(x)-u^{(k)}(b)) u^{(k+2)}(x) = u^{(k)}(x)u^{(k+2)}(x)=(-1)^ku^{(k)}(x)\cdot (-1)^{k+2}u^{(k+2)}(x) \geq 0$$
where the last inequality follows because $(-1)^{k} u^{k}(x) < 0$ and  $(-1)^{k+2} u^{k+2}(x) < 0$ for every $x\in (a,b)$. We conclude that $R_{u^{(k)}}(b)\ge 0$. 

{\bf Step 7:} Suppose $n>2$, then for $k=0,\ldots,n-3$ we assert that
\begin{equation}\label{rec:1} R_{u^{(k)}}(b) \le \frac 1 2 \left(1+ R_{u^{(k)}}(b) R_{u^{(k+1)}}(b)\right).
\end{equation}
Step 5 shows that $\lim_{x\to b^-}u^{(k+1)}(x) =u^{(k+1)}(b)=0$ and $\lim_{x\to b^-} u^{(k)}(x)=u^{(k)}(b)$. Now, observe that
\begin{align*}
R_{u^{(k)}}(b)&=\limsup_{x\to b^-}\frac{(u^{(k)}(x)-u^{(k)}(b)) u^{(k+2)}(x)}{u^{(k+1)}(x)^2}\\ &\leq \limsup_{x\to b^-}\frac{u^{(k+1)}(x) u^{(k+2)}(x) + (u^{(k)}(x)-u^{(k)}(b))u^{(k+3)}(x)} {2u^{(k+1)}(x)u^{(k+2)}(x)}\\
&=\frac 1 2 + \limsup_{x\to b^-}  \frac{(u^{(k)}(x)-u^{(k)}(b))u^{(k+3)}(x)} {2u^{(k+1)}(x)u^{(k+2)}(x)} \\
&=\frac 1 2 + \limsup_{x\to b^-}  \frac 1 2  \frac{(u^{(k)}(x)-u^{(k)}(b))u^{(k+2)}(x)}{u^{(k+1)}(x)^2} \frac{u^{(k+1)}(x) u^{(k+3)}(x)}{u^{(k+2)}(x)^2} \\
&\le\frac 1 2 + \frac 1 2 \; R_{u^{(k)}}(b)\; \limsup_{x\to b^-}\frac{u^{(k+1)}(x) 
u^{(k+3)}(x)}{u^{(k+2)}(x)^2} \\
& = \frac 1 2 \left(1+ R_{u^{(k)}}(b) R_{u^{(k+1)}}(b)\right).
\end{align*}
The inequalities follow from the same arguments as in Step 3.

{\bf Step 8:} Consider $n>2$. Then for $k= 1,\ldots,n-2$ and a positive integer $m$ satisfying $R_{u^{(k)}}(b) \leq \frac{m-1}{m}$, we have that $R_{u^{(k-1)}}(b) \leq \frac{m}{m+1}$. 

Observe that  
\begin{align*}
     & R_{u^{(k-1)}}(b) \le \frac 1 2 \left(1+ R_{u^{(k-1)}}(b) R_{u^{(k)}}(b)\right) \leq \frac 1 2 \left(1+ R_{u^{(k-1)}}(b) \frac{m-1}{m} \right).
     \end{align*}
The first inequality comes from Step 7. The second inequality holds due to Step 6 and that $R_{u^{(k)}}(b) \leq\frac{m-1}{m}$.

Hence, a simple algebraic manipulation leads to
    \begin{align*}  2m R_{u^{(k-1)}}(b) \leq m +  R_{u^{(k-1)}}(b)(m-1) 
     &  \Longleftrightarrow  R_{u^{(k-1)}}(b) \leq \frac {m}{m+1}, 
     \end{align*}
which proves Step 8.

{\bf Step 9:} We assert that $u\in \mathcal{G}_{n,[a,b]}$.

First, consider the case $n=2$ and suppose for the sake of a contradiction that $u^{(1)}(b)> 0$. The continuity of $u^{(1)}$ implies that a $\delta>0$ exists such that $\lim_{x\to b^-}u^{(1)}(x)^2>\delta$. Combining this inequality with 
$$\lim_{x\to b^-} (u(x)-u(b))u^{(2)}(x)=\underbrace{\lim_{x\to b^-} (u(x)-u(b))}_{0}\underbrace{\lim_{x\to b^-} u^{(2)}(x)}_{u^{(2)}(b)}=0\;,$$
implies that
$$ R(b)=\lim_{x\to b^-} \frac{(u(x) - u(b))u^{(2)}(x)}{u^{(1)}(x)^2} = \frac{\lim_{x\to b^-} (u(x) - u(b))u^{(2)}(x)}{\lim_{x\to b^-} u^{(1)}(x)^2} =0\;.$$
Thus, $u \notin LP_{2,[a,b]}$ which is a contradiction. We conclude that $u\in  \mathcal{G}_{2,[a,b]}$.%Because $u$ is twice differentiable with a continuous second derivative there exists an  $\epsilon>0$ such that for $x\in (b-\epsilon,b)$ we have $\frac{(u(x) - u(b))u^{(2)}(x)}{u^{(1)}(x)^2} < \frac{1}{2}$. Using equation (\ref{Eq: Lem2: 1}) we conclude that  $-u\notin LP_{2,[a,b]}$ which is a contradiction. We conclude that $u^{(1)}(b)=0$

Second, consider that $n>2$. We assert that $R_{u^{(n-2)}}(b)>0$. Suppose that not and  $R_{u^{(n-2)}}(b) \leq 0$. Step 6 implies that $R_{u^{(n-2)}}(b) = 0$. Then using Step 8 for $k=n-2$ and $m=1$ we get that $R_{u^{(n-3)}}(b) \leq \frac 1 2$. Reiterating Step 8 $k$ times, we obtain that $R_{u^{(n-2-k)}}(b) \leq \frac k{1+k}$. In particular, $R_{u^{(0)}}(b) \leq \frac{n-2}
{n-1}< \frac {n-1}{n}$. Thus, $u \notin LP_{n,[a,b]}$ which is a contradiction. Therefore, $R_{u^{(n-2)}}(b)>0$.

To conclude the proof, because $R_{u^{(n-2)}}(b)>0$ and
$\lim_{x\to b^-} u^{(n-2)}(x)-u^{(n-2)}(b) = 0$, we obtain that $\lim_{x\to b^-} u^{(n-1)} (x) =0$. Thus, $u^{(n-1)}(b)=0$. We conclude from Step 5 that $u\in \mathcal{G}_{n,[a,b]}$. 
\end{proof}

The next Lemma shows that we can approximate a convex and decreasing function by infinitely differentiable, convex and decreasing function. This result is quite standard and is used to show that some stochastic orders have a smooth generator \citep{denuit2002smooth}. Because our result approximation requires an additional condition that relates to the bounded domain we consider in this paper, we provide the proof in the Appendix.

\begin{lemma}\label{lemm:aux}
		Consider a convex and decreasing function $u:[a,b]\to \R$ such that $u^{(1)}(a)$ exists and is finite. Then there is a sequence of infinitely differentiable, decreasing and convex functions $u_n:[a,b]\to \R$ such that $u_n$ converges uniformly to $u$. 
\end{lemma}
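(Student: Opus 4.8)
The plan is to construct the approximating sequence by mollification (convolution with a smooth bump), but one must be careful near the endpoints, where convolution is not directly defined; the standard trick is to first extend $u$ to a slightly larger interval in a way that preserves convexity and monotonicity, then mollify on the interior, and finally restrict back to $[a,b]$. First I would note that since $u$ is convex on $[a,b]$ and $u^{(1)}(a)$ exists and is finite, the right derivative $u^{(1)}(a)$ and the (one-sided) derivatives on $[a,b)$ are all well-defined, nondecreasing, and bounded above by $u^{(1)}(b^-)\le 0$ (finiteness at $b$ follows since $u$ is finite and convex on a closed interval, or one argues directly). Extend $u$ to $[a-1,b]$ by setting $\bar u(x) = u(a) + u^{(1)}(a)(x-a)$ for $x \in [a-1,a]$; this affine piece matches $u$ and its one-sided derivative at $a$, so $\bar u$ is convex on $[a-1,b]$, and it is decreasing there because its slope on $[a-1,a]$ equals $u^{(1)}(a) \le u^{(1)}(b^-) \le 0$ and $\bar u$ is convex so slopes only increase.

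Next I would mollify: let $\varphi_\epsilon$ be a standard nonnegative smooth kernel supported in $[-\epsilon,\epsilon]$ with $\int \varphi_\epsilon = 1$, and set $u_\epsilon(x) = (\bar u * \varphi_\epsilon)(x) = \int \bar u(x-t)\varphi_\epsilon(t)\,dt$ for $x \in [a,b-\epsilon]$ (so that $x - t$ stays in $[a-1,b]$ whenever $\epsilon < 1$). Convexity and monotonicity are preserved under convolution with a nonnegative kernel: $u_\epsilon^{(2)} = \bar u'' * \varphi_\epsilon \ge 0$ in the distributional sense, and $u_\epsilon^{(1)} = \bar u' * \varphi_\epsilon \le 0$, and $u_\epsilon$ is $C^\infty$ by differentiation under the integral. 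To get a function defined on all of $[a,b]$ I would reparametrize: define $v_\epsilon(x) = u_\epsilon\!\big(a + \tfrac{b-\epsilon-a}{b-a}(x-a)\big)$, an affine change of variable mapping $[a,b]$ onto $[a,b-\epsilon]$ with positive slope, which preserves smoothness, convexity, and the decreasing property. Then set $u_n := v_{1/n}$ (for $n$ large enough that $1/n < \min\{1, b-a\}$).

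For uniform convergence, I would combine two estimates. First, $\|u_\epsilon - \bar u\|_{\infty, [a,b-\epsilon]} \to 0$: since $\bar u$ is continuous on the compact interval $[a-1,b]$ it is uniformly continuous, and $|u_\epsilon(x) - \bar u(x)| \le \int |\bar u(x-t) - \bar u(x)|\varphi_\epsilon(t)\,dt \le \sup_{|t|\le\epsilon}|\bar u(x-t)-\bar u(x)| =: \omega(\epsilon) \to 0$ by uniform continuity, uniformly in $x$. Second, the reparametrization moves points by at most $\big|\tfrac{b-\epsilon-a}{b-a} - 1\big|(b-a) = \epsilon$, so $\|v_\epsilon - u_\epsilon \circ (\text{id})\|$ is controlled again by the modulus of continuity of $u_\epsilon$ (or directly of $\bar u$, using that $u_\epsilon$ inherits $\bar u$'s modulus of continuity since convolution with a probability kernel does not increase it). Hence $\|v_\epsilon - u\|_{\infty,[a,b]} \le \omega'(\epsilon) \to 0$ for a suitable modulus $\omega'$, giving uniform convergence of $u_n$ to $u$.

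The main obstacle — really the only subtle point — is the endpoint handling: one cannot convolve a function defined only on $[a,b]$ without first extending it, and the extension must be done so that convexity is preserved (an arbitrary extension would create a kink with a decreasing slope at $a$, destroying convexity). Using the affine extension with slope exactly $u^{(1)}(a)$ is what makes this work, and this is precisely where the hypothesis ``$u^{(1)}(a)$ exists and is finite'' is used; without it the convex function could have slope $-\infty$ at $a$ (e.g.\ $-\sqrt{x-a}$) and no finite-slope affine extension would match. A secondary bookkeeping point is that the mollified function is only defined on $[a,b-\epsilon]$, handled cleanly by the affine reparametrization above; alternatively one could extend $\bar u$ affinely past $b$ as well (with slope $u^{(1)}(b^-)$, still $\le 0$, preserving convexity and monotonicity) and then mollify on a neighborhood of all of $[a,b]$, avoiding the reparametrization entirely — I would likely present whichever is shorter to write.
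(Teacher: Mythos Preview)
Your proposal is correct and follows essentially the same strategy as the paper: extend $u$ beyond $[a,b]$ in a way that preserves convexity and monotonicity (using the affine extension with slope $u^{(1)}(a)$ on the left, which is exactly where the hypothesis is used), then mollify and restrict. The only cosmetic difference is at the right endpoint: the paper extends $\hat u$ by the constant $u(b)$ for $x>b$ (slope $0$, which still preserves convexity since all earlier slopes are $\le 0$) and thus can mollify directly on a neighborhood of all of $[a,b]$, whereas your primary write-up handles the right side via an affine reparametrization onto $[a,b-\epsilon]$; your closing remark about extending affinely past $b$ as well is precisely the paper's route (with the even simpler slope choice $0$), so either presentation is fine.
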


The next Lemma completes the proof of the Theorem. We will postpone the proof to the Appendix. 

\begin{lemma} \label{Lemma: LPM iff G} We have $G \succeq_{n+1,[a,b]-BSD} F$ if and only if  $$ \int _{a}^{b}u(x)d(F-G)(x) \leq 0 \text { for all }  u \in \mathcal{G}_{n,[a,b]}.$$
\end{lemma}

%\begin{proof}[Proof of Theorem \ref{Main:Thm}]  \label{Proof of main thm}

	\section{Summary}
	
	In this paper, we provide a generator for the $n$-th degree bounded stochastic dominance orders. We identify sets of risk-averse decision-makers that satisfy curvature conditions, such as a bounded-from-below Arrow-Pratt measure of risk aversion or $n$-convexity, which generate the $n$-th bounded stochastic dominance orders. This result fills a theoretical gap in the literature on stochastic orders by providing a decision-theoretic interpretation for  bounded stochastic orders. Additionally, our findings shed light on  potential limitations of using bounded stochastic dominance tied to the peculiar behavior of decision makers that belong to the generators of these orders. We also show that our results also imply novel inequalities for $n$-convex functions, such as Jensen-type inequalities, non-trivial rankings over lotteries for decision-makers with a bounded-from-below Arrow-Pratt measure of risk aversion, and novel comparative statics results in consumption-savings problems for decision-makers with a globally bounded-from-below prudence measure.

%\section*{Conflict of Interest}

%Not applicable.

\bibliographystyle{ecta}
\bibliography{LPM}

\section{Appendix}\label{Sec:Appendix}

\begin{proof} [Proof of Proposition \ref{Prop:notequal}]
Without loss of generality assume that $a=0$.   Consider the function $$g(x)= -(b-x)^{n+1}(1- \gamma (b-x))$$
where $\gamma$ will be determined later in the proof. 
Note that
\begin{align*}
    g^{(1)}(x)&= (b-x)^n\left(n+1 - \gamma (n+2) (b-x)\right) \\
    g^{(2)}(x) &= -(n+1)(b-x)^{n-1} \left(n -  \gamma (n+2)  (b-x)\right)\\
    \psi_g(x)& := (n-1)g^{(1)}(x) + g^{(2)}(x)(b-x) = -(b-x)^n \left(n+1 - 2\gamma (n+2)(b-x)\right). 
    \end{align*}
Simple inspection implies that $g^{(1)}(x)>0$, $g^{(2)}(x)<0$ and $\psi_g(x)\leq 0$ for every $x\in [0,b]$ if and only if $g^{(1)}(0)>0$, $g^{(2)}(0)<0$ and $\psi_g(0)\leq 0$. Thus, $\gamma =\frac{n+1}{2b(n+2)}$ makes the three inequalities to hold. Therefore, $g\in AP_{n,[0,b]}$ 
To conclude, we show that $g \notin LP_{n,[0,b]}$. To this extent, we use the characterization of $n,[0,b]$-concave differentiable functions and show that for $x$ close to zero we have
$$ R(x)=\frac {(g(x)-g(b))g^{(2)}(x)}{g^{(1)}(x)^2}<\frac{n-1}n.$$
Noting that $g(b) =0$, we have
\begin{align*}
    R(x)& = \frac{(n+1) (1-\gamma (b-x)) \left(n -  \gamma (n+2) (b-x)\right)}{\left(n+1 - \gamma (n+2) (b-x)\right)^2}.
\end{align*}
Taking $x=0$ and using $\gamma = \frac{n+1}{2b(n+2)}$, we obtain 
$ R(0) = \frac {(n+3)(n-1)}{(n+2)(n+1)}.$ Because for every $n\ge 2$, $\frac{n+3}{(n+2)(n+1)}< \frac 1 n$, we conclude that $R(0)< \frac {n-1}n$. The continuity of $g$ further implies that for $x$ close to zero, $R(x)<\frac {n-1}n$. Thus, $g\notin LP_{n,[0,b]}$.
\end{proof}

\begin{proof} [Proof of Lemma \ref{Lemma:Taylor}]
     Suppose that $u \in U_{n+1,[a,b]}$. We have $g^{(j)}(b) = u^{(j)}(b) - u^{(j)}(b) = 0 $ for each $j=1,\ldots,n$. Suppose that $n$ is an even number. We have $ g^{(n+1)}(x) = u^{(n+1)}(x) \geq 0$, and $g^{(n)}(x) = u^{(n)}(x) - u^{n}(b) \leq 0$ because $u^{(n)}$ is an increasing function. Combining this with the fact that $g^{n-1}(b) = 0$, implies that $g^{n-1}(x) \geq 0$ for all $x \in [a,b]$. Continuing inductively, we conclude that $g \in U_{n+1,[a,b]} $. Hence $g \in \mathcal{G}_{n+1,[a,b]} = U_{n+1,[a,b]} \cap AP _{n+1,[a,b]} = U_{n+1,[a,b]} \cap LP _{n+1,[a,b]}$. The proof follows in a similar manner for the case that $n$ is an odd number. 
\end{proof}

\begin{proof}[Proof of Theorem \ref{Thm:Extended_BSD_n23}]
Recall that for any distribution function $H$ on $[a,c]$, define $H^{1}(x):=H(x)$ and, recursively, $H^{j+1}(x):=\int_a^x H^j(z)dz$ for every positive integer $j$. For $H=F,G$, $j=1,2,3$, and $t\in[a,c]$, the integration-by-parts identity used in Lemma \ref{Lemma: LPM iff G} gives
\begin{equation} \label{Eq:LPM_repeated_identity_extended}
H^{j+1}(t)=\frac{1}{j!}LPM_{j,t}(H).
\end{equation}

Since $F$ and $G$ are supported on $[a,b]$, they assign zero probability mass to $(b,c]$. Hence, for every $u:[a,c]\to\mathbb{R}$,
\begin{equation*}
\int_a^c u(x)dF(x)=\int_a^b u(x)dF(x)
\quad \text{and} \quad
\int_a^c u(x)dG(x)=\int_a^b u(x)dG(x).
\end{equation*}
Moreover, $F(x)-G(x)=0$ for all $x\in[b,c]$.

Fix $n\in\{2,3\}$. By Theorem \ref{Main:Thm}, applied to the interval $[a,c]$, we have 
$U_{n,[a,c]}\cap LP_{n,[a,c]}=U_{n,[a,c]}\cap AP_{n,[a,c]}$. Thus, the utility inequality for every 
$u\in U_{n,[a,c]}\cap LP_{n,[a,c]}=U_{n,[a,c]}\cap AP_{n,[a,c]}$ is equivalent to $G\succeq_{n+1,[a,c]-BSD}F$. By \eqref{Eq:LPM_repeated_identity_extended}, this is equivalent to
\begin{equation} \label{Eq:Extended_BSD_integral_condition}
F^{n+1}(x)-G^{n+1}(x)\geq 0
\quad \text{for all } x\in[a,c].
\end{equation}
It remains to rewrite \eqref{Eq:Extended_BSD_integral_condition} over the interval $[b,c]$ and then translate the resulting conditions into LPM notation.

For $x\in[b,c]$, the recursive definition and the fact that $F(t)-G(t)=0$ on $[b,c]$ imply
\begin{equation*}
F^{2}(x)-G^{2}(x)=F^{2}(b)-G^{2}(b).
\end{equation*}
Therefore,
\begin{equation*}
F^{3}(x)-G^{3}(x)
=
F^{3}(b)-G^{3}(b)
+(x-b)\left(F^{2}(b)-G^{2}(b)\right),
\end{equation*}
and
\begin{align*}
F^{4}(x)-G^{4}(x)
&=
F^{4}(b)-G^{4}(b)
+(x-b)\left(F^{3}(b)-G^{3}(b)\right)  \\
&\quad
+\frac{(x-b)^2}{2}\left(F^{2}(b)-G^{2}(b)\right).
\end{align*}

Consider first $n=2$. Condition \eqref{Eq:Extended_BSD_integral_condition} requires $F^{3}(x)-G^{3}(x)\geq 0$ for all $x\in[a,c]$. By \eqref{Eq:LPM_repeated_identity_extended}, this is equivalent on $[a,b]$ to
\begin{equation*}
LPM_{2,t}(F)\geq LPM_{2,t}(G)
\quad \text{for all } t\in[a,b].
\end{equation*}
On $[b,c]$, the expression
\begin{equation*}
F^{3}(b)-G^{3}(b)
+(x-b)\left(F^{2}(b)-G^{2}(b)\right)
\end{equation*}
is affine in $x-b$, so its minimum on $[b,c]$ is attained at $x=b$ or $x=c$. The condition at $x=b$ is already included in the inequalities on $[a,b]$. The condition at $x=c$ is $F^{3}(c)-G^{3}(c)\geq 0$, which is equivalent by \eqref{Eq:LPM_repeated_identity_extended} to
\begin{equation*}
LPM_{2,c}(F)\geq LPM_{2,c}(G).
\end{equation*}
Thus, the stated LPM conditions are necessary and sufficient when $n=2$.

Now consider $n=3$. Condition \eqref{Eq:Extended_BSD_integral_condition} requires $F^{4}(x)-G^{4}(x)\geq 0$ for all $x\in[a,c]$. By \eqref{Eq:LPM_repeated_identity_extended}, this is equivalent on $[a,b]$ to
\begin{equation*}
LPM_{3,t}(F)\geq LPM_{3,t}(G)
\quad \text{for all } t\in[a,b].
\end{equation*}
On $[b,c]$, the expression
\begin{align*}
F^{4}(b)-G^{4}(b)
&+(x-b)\left(F^{3}(b)-G^{3}(b)\right)  \\
&+\frac{(x-b)^2}{2}\left(F^{2}(b)-G^{2}(b)\right)
\end{align*}
is a quadratic polynomial in $x-b$. Its value at $x=b$ is already covered by the inequalities on $[a,b]$. Its value at $x=c$ is $F^{4}(c)-G^{4}(c)$, which is equivalent by \eqref{Eq:LPM_repeated_identity_extended} to
\begin{equation*}
LPM_{3,c}(F)\geq LPM_{3,c}(G).
\end{equation*}

If $F^{2}(b)-G^{2}(b)\leq 0$, then this quadratic is concave or affine, so its minimum on $[b,c]$ is attained at an endpoint. Hence the endpoint conditions are necessary and sufficient in this case. If $F^{2}(b)-G^{2}(b)>0$, then the quadratic is strictly convex. Its vertex lies in the open interval $(b,c)$ exactly when
\begin{equation*}
-(c-b)\left(F^{2}(b)-G^{2}(b)\right)
<
F^{3}(b)-G^{3}(b)
<
0.
\end{equation*}
When this occurs, non-negativity on $[b,c]$ also requires the value at the vertex to be non-negative. Evaluating the quadratic at the vertex gives
\begin{equation} \label{Eq:vertex_repeated_integral}
F^{4}(b)-G^{4}(b)
-\frac{\left(F^{3}(b)-G^{3}(b)\right)^2}
{2\left(F^{2}(b)-G^{2}(b)\right)}
\geq 0.
\end{equation}
If the vertex does not lie in $(b,c)$, the minimum on $[b,c]$ is attained at an endpoint, so no additional condition is needed.

It remains only to translate the vertex condition into LPM notation. By \eqref{Eq:LPM_repeated_identity_extended},
\begin{equation*}
F^{2}(b)-G^{2}(b)=LPM_{1,b}(F)-LPM_{1,b}(G),
\end{equation*}
\begin{equation*}
F^{3}(b)-G^{3}(b)
=
\frac{1}{2}\left(LPM_{2,b}(F)-LPM_{2,b}(G)\right),
\end{equation*}
and
\begin{equation*}
F^{4}(b)-G^{4}(b)
=
\frac{1}{6}\left(LPM_{3,b}(F)-LPM_{3,b}(G)\right).
\end{equation*}
Therefore, the condition that the vertex lies in $(b,c)$ becomes exactly
\begin{equation*}
LPM_{1,b}(F)-LPM_{1,b}(G)>0
\end{equation*}
and
\begin{equation*}
-2(c-b)\left(LPM_{1,b}(F)-LPM_{1,b}(G)\right)
<LPM_{2,b}(F)-LPM_{2,b}(G)<0.
\end{equation*}
Under this condition, inequality \eqref{Eq:vertex_repeated_integral} is equivalent, after multiplying by 
$24\left(LPM_{1,b}(F)-LPM_{1,b}(G)\right)>0$, to
\begin{equation*}
4\left(LPM_{1,b}(F)-LPM_{1,b}(G)\right)
\left(LPM_{3,b}(F)-LPM_{3,b}(G)\right)
\geq
3\left(LPM_{2,b}(F)-LPM_{2,b}(G)\right)^2.
\end{equation*}
Thus, the stated LPM conditions are necessary and sufficient when $n=3$.

Combining these conditions with the initial equivalence from Theorem \ref{Main:Thm} proves the result.
\end{proof}

\begin{proof}[Proof of Proposition~\ref{prop-consumption}]
Let $n \geq 2$ be an integer. Define the function $k_{s} :[0 ,\overline{y}] \rightarrow \mathbb{R}$ by  $k_{s}(y) : =-u^{ (1) }(Rs +y)$ for all $0 \leq s \leq x$. First note that $k_{s}(y)$ belongs to $U_{n,[0,Rx +\overline{y} -Rs]} \cap AP_{n,[0 ,Rx +\overline{y} -Rs]}$ as $k_{s}^{(j)}$ has the same sign as $-u^{(j+1)}$ and $$k_{s}^{(j)} ( Rx +\overline{y} -Rs ) = -u^{(j+1)}(Rs+ Rx +\overline{y} -Rs) = -u^{(j+1)}(Rx +\overline{y}) =0 $$
for $j=1,\ldots,n-1$ where in the last inequality we used the fact that $U_{n,[0,Rx +\overline{y} -Rs]} \cap AP_{n,[0 ,Rx +\overline{y} -Rs]} = \mathcal{G}_{n,[0,Rx +\overline{y} -Rs]}$ (see Theorem \ref{Main:Thm}). 

Clearly $G  \succeq _{n+1 ,[0 ,Rx +\overline{y}]-BSD} F$ implies that $G \succeq _{n+1 ,[0 ,Rx -Rs +\overline{y}] -BSD}F$ for all $s \in [0 ,x]$.  

Let $w_{s} (s ,q)$ be the derivative of $w$ with respect to $s$. Let $s \in [0 ,x]$. We have 
\begin{align*}w_{s}(s ,G) &  = -u^{ \prime }\left (x -s\right ) +R\int _{0}^{\overline{y}}u^{ \prime }(Rs +y)dG(y) \\
 &  = -u^{ \prime }(x -s) -R\int _{0}^{\overline{y}}k_{s}\left (y\right )dG(y) \\
 &  \leq  -u^{ \prime }\left (x -s\right ) - R\int _{0}^{\overline{y}}k_{s}(y)dF(y) =w_{s}(s ,F) ,\end{align*}
 where the inequality follows from the fact that $G \succeq _{n+1 ,[0 ,Rx -Rs +\overline{y}] -BSD}F$, $k_{s}(y) \in U_{n,[0,Rx +\overline{y} -Rs]} \cap AP_{n,[0 ,Rx +\overline{y} -Rs]}$ and applying Theorem \ref{Main:Thm}. Theorem 6.1 in \cite{topkis1978}
implies that $\rho(F) \geq \rho(G)$.           
\end{proof}

\begin{proof}[Proof of Corollary \ref{Cor:convex}]
Note that $k_{n}(x):=-f(b-x^{1/n})$ is convex on $[0,(b-a)^{n}]$ if and only if $k_{n}^{(2)}(x) \geq 0$ on $[0,(b-a)^{n}]$, i.e., if and only if $x^{1/n}f^{(2)} (b- x^{1/n}) + (n-1)f^{(1)} (b- x^{1/n}) \leq 0$ for all $x \in [0,(b-a)^{n}]$. Letting $y=b-x^{1/n}$ the last inequality is equivalent to $(b-y) f^{(2)} (y) + (n-1)f^{(1)} ( y) \leq 0$ for all $y \in [a,b]$. This implies that $f$ belongs to $AP_{n,[a,b]}$, and hence, Theorem \ref{Main:Thm} implies that $f$ is $n,[a,b]$-concave, i.e., $(f(b)-f(x))^{1/n}$  is convex on $[a,b]$. Similarly, if $(f(b)-f(x))^{1/n}$  is convex on $[a,b]$, then Theorem \ref{Main:Thm} implies that $f$ belongs to $AP_{n,[a,b]}$ which implies that $(b-y) f^{(2)} (y) + (n-1)f^{(1)} ( y) \leq 0$ for all $y \in [a,b]$. Thus, $-f(b-x^{1/n})$ is convex on $[0,(b-a)^{n}]$. 
\end{proof}

\begin{proof}[Proof of Corollary \ref{Corr:jensen}]
Let $X$ be a random variable on $[a,b]$. From Corollary \ref{Cor:convex} the function $-f(b-x^{1/n})$ is convex on $[0,(b-a)^{n}]$. Hence, from Jensen's inequality we have $$  \mathbb{E}f(b-Y^{1/n}) \leq f\left (b-\mathbb{E}(Y)^{1/n} \right ).
$$ 
for the random variable $Y=(b-X)^{n}$. Thus, 
$$ \mathbb{E}f(X) \leq f \left (b -(\mathbb{E}(b-X)^{n})^{1/n} \right ).$$
For the second inequality note that $f$ is increasing since $f \in U_{n,[a,b]}$ and use the monotonicity of the $L^{p}$ norm  to conclude that $(\mathbb{E}(b-X)^{n})^{1/n} \geq \mathbb{E}(b-X)$.  
\end{proof}

\begin{proof}[Proof of Lemma \ref{lemm:aux}]
	We first extend $u$ to be defined on all $\R$. Define $\hat u:\R\to \R$ by
	$$\hat u(x)=\begin{cases}
	u(a)+u^{(1)}(a)(x-a) &\mbox{ if } x <a \\
	u(x) &\mbox{ if } x\in[a,b]\\
	u(b) &\mbox{ if } x>b
	\end{cases}. $$
	By construction the restriction of the function $\hat u$  to $[a,b]$, $\hat u|_{[a,b]}$, is the function $u$. We claim that $\hat u$ is decreasing and convex.
	\begin{itemize}
		\item Monotonicity:  Because $u^{(1)}(a)\le0$, we have that $\hat u$ is decreasing  for $x<a$. For $x\in [a,b]$, $\hat u$ is decreasing because $u$ is decreasing. For $x>b$, the function is constant. Because $\lim_{x\to a+}\hat u(x)=u(a)$ and $\lim_{x\to b^-} \hat u(x)=u(b)$, we conclude that $\hat u$ is decreasing on $\R$.
		\item Convexity: Take $x_1,x_2\in \R$, with $x_1<x_2$, and $\lambda \in (0,1)$. 
		
	For $\lambda x_1+(1-\lambda)x_2\ge b$, by the monotonicity of $\hat u$, we have that $$\hat u (\lambda x_1+(1-\lambda)x_2)=u(b)=\min_{x\in \R} \hat u(x).$$ Hence, $\hat u (\lambda x_1+(1-\lambda)x_2) \le \lambda \hat u ( x_1) +(1-\lambda) \hat u(x_2)$.
	
For $\lambda x_1+(1-\lambda)x_2\in [a,b]$, we split to  three subcases. If $x_1,x_2 \in [a,b]$, we have $\hat u(\lambda x_1+(1-\lambda)x_2) \le \lambda \hat u ( x_1) +(1-\lambda) \hat u(x_2)$ because $u$ is convex. 

If $x_1<a$ and $x_2\in[a,b]$, then $\lambda x_1+(1-\lambda)x_2 = \hat \lambda a+(1-\hat \lambda)x_2$, with $\hat \lambda =\lambda \frac{x_2-x_1}{x_2-a}$.\footnote{Because $\lambda x_1+(1-\lambda)x_2\in [a,x_{2}]$, we have that $\hat \lambda \in [0,1]$.}  %\\
%&\le \lambda \hat u(a)+(1- \lambda) \hat u(x_2) \le  \lambda \hat u(x_1)+(1- \lambda) \hat u(x_2)\;. 
 The convexity of $u$ implies  that\footnote{Because $u$ is convex and differentiable at $a$, we have $u^{(1)}(a)(x-a)+u(a)\le u(x)$ for every $x \in [a,b]$.}
\begin{align*}
& \hat u (x_2)-\hat u(a)\ge u^{(1)}(a) (x_2-a)\\  %\quad \mbox{ (multiplying by } \underbrace{(x_1-a)}_{<0}) \\
     \Longleftrightarrow & \underbrace{(x_1-a)}_{(x_1-x_2)+(x_2-a)} (\hat u (x_2)-\hat u(a)) \le (x_1-a)\quad  u^{(1)}(a) (x_2-a) \\
   \Longleftrightarrow & (x_1-x_2)  (\hat u (x_2)-\hat u(a)) \le (x_2-a) (\underbrace{u^{(1)}(a)(x_1-a) + \hat u(a)}_{\hat u(x_1)} - \hat u (x_2))\\
    \Longleftrightarrow & \frac{x_2-x_1}{x_2-a} (\hat u (a)-\hat u(x_2))\le \hat u(x_1) - \hat u (x_2) \qquad \\% \mbox{ (multiplying by } \underbrace{\lambda}_{>0})\\
  \Longleftrightarrow &  \underbrace{\lambda\; \frac{x_2-x_1}{x_2-a}}_{\hat \lambda} \;(\hat u (a)-\hat u(x_2)) \le \lambda \; (\hat u(x_1) - \hat u (x_2))\;\\
   \Longleftrightarrow & \hat \lambda \hat u(a)+(1- \hat \lambda) \hat u(x_2) \le  \lambda \hat u(x_1)+(1- \lambda) \hat u(x_2)\;.
\end{align*}
Using that $\hat u$ is convex over $[a,b]$, we obtain
\begin{align*}
\hat u(\lambda x_1+(1-\lambda)x_2 )&= \hat u(\hat \lambda a+(1-\hat \lambda)x_2)\le \hat \lambda \hat u(a)+(1-\hat \lambda) \hat u(x_2)\end{align*}

which proves subcase (ii). 

Finally, if $x_2>b$ we have
		$$\hat u(\lambda x_1+(1-\lambda)x_2) \le \hat u (\lambda x_1+(1-\lambda)b) \le  \lambda \hat u ( x_1) +(1-\lambda) \hat u( b) =  \lambda \hat u ( x_1) +(1-\lambda) \hat u( x_2)$$
		
which proves subcase (iii). The first inequality follows because $\hat u$ is decreasing. The second inequality follows from subcases (i) and (ii).  		
		
For, $\lambda x_1+(1-\lambda)x_2 <a$ we claim that
$$\hat u(\lambda x_1+(1-\lambda)x_2)=\lambda \hat u(x_1) +(1-\lambda)(u^{(1)}(a)(x_2-a)+u(a)) \le \lambda \hat u(x_1) +(1-\lambda) \hat u(x_2) \;.$$ 

Thus, we need to show that $u^{(1)}(a)(x_2-a)+u(a)\le \hat u(x_2)$. If $x_2<a$, the inequality holds from the definition of $\hat u$. If $x_2\in [a,b]$ the inequality holds from the convexity of $u$. If $x_2>b$ we have that $$u^{(1)}(a)(x_{2}-a)+u(a) \leq u^{(1)}(a)(b-a)+u(a) \leq \hat u(b)=\hat u(x_{2}).$$  
The first inequality follows because $u^{(1)}(a) \leq 0$. The second inequality follows from the convexity of $u$.

	\end{itemize}
We prove that $\hat u$ is decreasing and convex. Because a convex function is continuous on the interior of the domain, we have that $\hat u$ is a continuous function.

The next step is based on a mollification argument (see Appendix C in \cite{evans10}) and is quite standard \citep{denuit2002smooth}. Consider a positive mollifier $g$, with a compact support and define $g_n(x)=ng(nx)$. Then, $$\hat u_n(x)=\hat u * g_n (x)=\int \hat u(x-y)g_n(y)dy$$
is infinitely differentiable. Since $\hat u$ is continuous, we have that $\hat u_n$ converges uniformly to $\hat u$ on compact subsets of $\R$ (see Appendix C, Theorem 6, in \cite{evans10}). In particular, we have that $\hat u_n$ converges uniformly to $ \hat u =u $ on $[a,b]$.

We assert that $\hat u_n$ is convex, decreasing, which implies that $u_n:=\hat u _n|_{[a,b]}$ is convex decreasing, infinitely differentiable, such that $u_n$ converges uniformly to $u$.

Indeed, take $x_1<x_2$ and $\lambda\in (0,1)$, then:
\begin{itemize}
	\item Monotonicity: Because $\hat u$ is decreasing and $g\ge 0$, we have that for every $y$, $$\hat u(x_1-y)g_n(y) \ge \hat u(x_2-y)g_n(y)\;.$$ Hence, integrating over $y$, we obtain that $\hat u_n(x_1)\ge \hat u_n(x_2)$.
	\item Convexity: Because $\hat u$ is convex and $g\ge 0$, we have that for every $y$, $$\hat u(\lambda x_1 + (1-\lambda) x_2-y)g_n(y) = \hat u(\lambda (x_1-y) +(1-\lambda)(x_2-y))g_n(y) \le \lambda \hat u(x_1-y)g_n(y) +(1-\lambda)\hat u(x_2-y)g_n(y)\;.$$ Hence, integrating over $y$, we obtain that $\hat u_n(\lambda x_1 +(1-\lambda )x_2)\le \lambda \hat u_n(x_1) +(1-\lambda)\hat u_n(x_2)$.
\end{itemize}
\end{proof}

\begin{proof}[Proof of Lemma \ref{Lemma: LPM iff G}] Assume that $G \succeq_{n+1,[a,b]-BSD} F$. For every positive integer $j$ and a distribution function $W$ on $[a,b]$ define recursively $W^{j+1}(x) := \int _{a} ^{x} W^{j}(z) dz $ for all $x \in [a,b]$ where $W_{1}:=W$. Integration by parts yields 
\begin{equation} \label{Eq:Fish}
    W^{j+1}(c) = \frac{1}{j!}\int_{a}^{c} (c-x)^{j} dW(x) =  \frac{1}{j!}\int_{a}^{b} M_{j,c} (x) dW(x) \end{equation} 
for every positive integer $j$ where  $M_{j,c} (x) = \max \{c-x,0\}^{j}$. 

%We show that $G \succeq_{n+1,[a,b]-BSD} F$ if and only if  $$ \int _{a}^{b}u(x)d(F-G)(x) \leq 0 \text { for all }  u \in \mathcal{G}_{n,[a,b]}.$$

Let  $u \in \mathcal{G}_{n,[a,b]}$. Using integration by parts for a Lebesgue-Stieltjes integral  multiple times yield
\begin{align*}\int _{a}^{b}u(x)d(F-G)(x) &  =u(x)\left (F(x) -G(x)\right ) \Big|_{a^-}^{b^+} -\int _{a}^{b}u^{ (1) }(x)\left (F(x) -G(x)\right )dx \\
 &  =  -u^{ (1) }(x)\int _{a}^{x}\left (F(z) -G(z)\right )dz \Big |_{a}^{b} +\int _{a}^{b}u^{(2)} (x)\int _{a}^{x}\left (F(z) -G(z)\right )dzdx \\
 &  = u^{(2)} (x)\int _{a}^{x}\int _{a}^{y}\left (F(z) -G(z)\right )dzdy \Big |_{a}^{b} -\int _{a}^{b}u^{ (3) }(x)\int _{a}^{x}\int _{a}^{y}\left (F(z) -G(z)\right )dzdydx \\
 &  =u^{(2)} (b)\int _{a}^{b}\int _{a}^{y}\left (F(z) -G(z)\right )dzdy -\int _{a}^{b}u^{ (3) }(x)\int _{a}^{x}\int _{a}^{y}\left (F(z) -G(z)\right )dzdydx \\ 
 &   =u^{(2)} (b)(F^{3}(b) - G^{3}(b) ) -\int _{a}^{b}u^{ (3) }(x) (F^{3}(x) - G^{3}(x) ) dx. \end{align*}
In the second equality we use the fact that $F(b^+) -G(b^+) =F(a^-) -G(a^-) =0$. In the third equality we use the fact that $u \in \mathcal{G}_{n,[a,b]}$ implies that $u^{ (1) }(b) =0$. Continuing integrating by parts and using the fact that $u \in \mathcal{G}_{n,[a,b]}$ implies that  $u^{ (k) }(b) =0$ for all $k=1,\ldots,n-1$ yield 
\begin{align*}
     \int _{a}^{b}u(x)d(F-G)(x) & =  (-1) ^{n} u^{(n)}(b) (F^{n+1}(b) - G^{n+1}(b) ) + (-1)^{n+1} \int _{a}^{b} u^{(n+1)} (x) (F^{n+1}(x)-G^{n+1}(x) ) dx \\
     & = \frac{1} {n!}(-1) ^{n} u^{(n)} (b) \left ( \int _{a} ^{b} M_{n,b}(x) d (F- G) (x)  \right ) \\ 
     & + \frac{1} {n!} \int _{a} ^{b} (-1)^{n+1} u^{(n+1)} (x) \left (\int _{a} ^{b} M_{n,x}(z) d (F- G) (z)  \right ) dx \leq 0.
     \end{align*}
The equality follows from Equation (\ref{Eq:Fish}). 
The inequality follows from the facts that $(-1)^{k} u^{(k)}  \leq 0$ for $k=n,n+1$ and $G \succeq_{n+1,[a,b]-BSD} F$.

Now assume that $\int_a^b v(x) dG(x)\ge \int_a^b v(x)dF(x)$ for every function $v \in \mathcal{G}_{n,[a,b]}$. In what follows, we show that for every lower partial moment function  $u(x)=\max\{c-x,0\}^n$, there exist a sequence of functions $(u_{j})$ such that $u_j\in  \mathcal{G}_{n,[a,b]}$ for every integer $j$ and $u_j$ converges to $-u$ weakly, i.e., $\lim _{j \rightarrow \infty} \int_a^b u_j(x)dW(x) = -\int_a^b u (x) dW(x)$ for every distribution function $W$ on $[a,b]$. Therefore, proving that $G \succeq_{n+1,[a,b]-BSD} F$.

Let $u(x)=\max\{c-x,0\}^n$ be a lower partial moment function. First, notice that the function $-u$ is $n-1$ times differentiable with $-u^{(n-1)}(x)=n!(-1)^{n} \max\{c-x,0\}$. Hence,  $-u^{(n-1)}$ is either a concave and increasing function or a convex and decreasing function. For simplicity of the proof, assume that $n$ is even, i.e., $-u^{(n-1)}$ is convex and decreasing (the proof for the concave and increasing case is the same). Because $-u^{(n-1)}(a)$ exists, Lemma~\ref{lemm:aux} implies the existence of a sequence $(g_j)$ of decreasing, convex, and infinitely differentiable functions such that $g_{j}$ converges uniformly to $-u^{(n-1)}$.\footnote{The Lemma states the result for $f$ that is convex, decreasing, such that $f^{(1)}(a)$ exists. Clearly, it also holds for the case that $f$ is a concave and increasing function such that $f^{(1)}(a)$ exists.} 

Define recursively $G_{j,k}(x) = \int _{a}^{x} ( G_{j,k-1} (z) -G_{j,k-1}(b) ) dz$ for all $x \in [a,b]$ and every positive integer $k$ where $G_{j,1}(x):=g_{j}(x)-g_j(b)$. Let $u_j(x):=G_{j,n}(x)$, in the next two steps we show that (i) $ u_j\in  \mathcal{G}_{n,[a,b]}$ and  (ii) $u_j$ converges to $-u$ weakly. 

{\bf Step 1.} Notice that because the function $g_j$  is infinitely differentiable we have $u_j \in C^{n+1}([a,b])$. In addition, $u_{j}$ satisfies 
$$u_j^{(k)}(x) =\begin{cases}
G_{j,n-k}(x) - G_{j,n-k}(b) &\mbox{ if }k=1,\ldots,n-1 \\
g_j^{(k+1-n)}(x) &\mbox{ if } k=n,n+1.
\end{cases}
$$
Hence, $u_j^{(k)}(b) =0$ for $k=1,\ldots,n-1$. It remains to show that $u_j\in U_{n,[a,b]}$. Because $g_j$ is convex and decreasing and $n$ is an even number, we have that $(-1)^k u_j^{(k)}(x) \le 0$ for $k=n,n+1$. For $k\le n-1$, we show by induction that the function $G_{j,k}$ is decreasing if $k$ is odd and increasing if $k$ is even. The base case $k=1$ is straightforward since $G_{j,1}(x)=g_{j}(x)-g_j(b)$ and $g_j$ is a decreasing function. For the induction step, notice that $G^{(1)}_{j,k}(x) = G_{j,k-1}(x)-G_{j,k-1}(b)$. Therefore if $G_{j,k-1}$ is a decreasing (increasing) function we have that $G^{(1)}_{j,k}(x)$ is positive (negative) and, hence, $G_{j,k}$ is an increasing (decreasing) function. We conclude that $(-1)^k u_j^{(k)}\le 0$ for every $k=1,\ldots,n+1$. Thus, $u_j\in U_{n,[a,b]}$ and, therefore, $u_j\in \mathcal{G}_{n,[a,b]}$.

{\bf Step 2.} We claim that for any distribution function $F$ on $[a,b]$ we have $\int_a^b u_j(x)dF(x) \to -\int_a^b u(x) dF(x)$. Because $u_j^{(k)}(b)=0$ and  $u^{(k)}(b)=0$ for $k=1,\ldots,n-2$, integration by parts implies that
\begin{align}
    \int_a^b u_j(x) dF(x) &= (-1)^{n-1}\int_a^b u_j^{(n-1)}(x) F^{n-1}(x) dx,  \text {   and } \\ 
     -\int_a^b u(x) dF(x) &= (-1)^{n-1}\int_a^b -u^{(n-1)}(x) F^{n-1}(x) dx.
\end{align}
	 Because $u_j^{(n-1)} (x)= g_j(x)-g_j(b)$, $g_j$ converges uniformly to $-u^{(n-1)}$ and $\lim _{j \rightarrow \infty} g_j(b)=0$, we conclude using the 
	 dominated convergence theorem that $\lim_{j\to \infty}  \int_a^b u_j(x) dF(x) =  -\int_a^b u(x) dF(x) $.
\end{proof}

\end{document}